       \def\b{\beta}        \def\g{\gamma}
       \def\la{\lambda}     
              \def\f{\phi}
                  \def\z{\zeta}
\def\ch{\chi}               \def\ta{\tau}
\def\e{\varepsilon}       \def\vf{\varphi}
\def\G{\Gamma}
\def\D{{\mathbb D}}     \def\T{{\mathbb T}}
\def\C{{\mathbb C}}     \def\N{{\mathbb N}}
\def\U{{\mathbb U}}
   \def\cd{{\mathcal D}}
\def\ch{{\mathcal H}}
\def\({\left(}       \def\){\right)}
\def\pd{\partial}
\newtheorem{prop}{\sc Proposition}
\newtheorem{lem}[prop]{\sc Lemma}
\newtheorem{thm}[prop]{\sc Theorem}
\begin{document}
\title[Co-isometric weighted composition operators]{Co-isometric weighted composition operators on Hilbert spaces of analytic functions}
\author[M.J. Mart\'{\i}n]{Mar\'{\i}a J. Mart\'{\i}n}
\address{Departamento de An\'alisis Matem\'atico, Universidad de La Laguna, 38200 San Crist\'obal de La Laguna, S/C de Tenerife, Spain} \email{maria.martin@ull.es}
\author[A. Mas]{Alejandro Mas}
\address{Departamento de Matem\'aticas, Universidad Aut\'onoma de
Madrid, 28049 Madrid, Spain}
\email{alejandro.mas@uam.es}
\author[D. Vukoti\'c]{Dragan Vukoti\'c}
\address{Departamento de Matem\'aticas, Universidad Aut\'onoma de
Madrid, 28049 Madrid, Spain} \email{dragan.vukotic@uam.es}
\subjclass[2010]{46E22, 47B33}
\keywords{Reproducing kernel, weighted Hardy spaces, weighted composition operator, unitary operator, co-isometric operator.}
\date{20 June, 2020.}
\begin{abstract}
We obtain a necessary and sufficient condition for a weighted composition operator to be co-isometric on a general weighted Hardy space of analytic functions in the unit disk whose reproducing kernel  has the usual natural form. This turns out to be equivalent to the property of being unitary. The result reveals a dichotomy identifying a specific family of weighted Hardy spaces as the only ones that support non-trivial operators of this kind.
\end{abstract}
\maketitle
\section{Introduction}
 \label{sec-intro}
\subsection{Weighted composition operators}
 \label{subsec-WCO}
Let $\D$ denote the unit disk in the complex plane. For a function $F$ analytic in $\D$ and an analytic map $\f$ of $\D$ into itself, the \textit{weighted composition operator\/} (or simply WCO)  $W_{F,\f}$ with \textit{symbols\/} $F$ and $\f$ is defined formally by the formula $W_{F,\f}f = F (f\circ\f) = M_F C_\f f$ as the composition followed by multiplication. Such operators have been studied a great deal for various reasons. Besides their connections with some important problems it is well-known that, in analogy with the classical theorem of Banach and Lamperti, the surjective linear isometries of all (non-Hilbert) Hardy and weighted Bergman spaces are operators of this type \cite{F}, \cite{Kl}.
\subsection{Co-isometric and unitary operators}
 \label{subsec-coisom}
A \textit{linear isometry\/} of a Banach space is a linear operator $T$ such that $\|Tf\|=\|f\|$ for all $f$ in the space. On a Hilbert space this is equivalent to $T\sp{\ast} T=I$; if the Hilbert space isometry is also onto, it is called a \textit{unitary operator\/} and is characterized by the property $T\sp{\ast} T=T T\sp{\ast}=I$. In general, Hilbert spaces have plenty of unitary operators (\text{e.g.}, permuting the elements of an orthonormal basis gives such transformations) so it is of interest to know when an operator of some specific type (such as WCO) is unitary. Here we study the question of when a WCO has the (apparently weaker) property of being \textit{co-isometric\/}: $T T\sp{\ast}=I$, meaning that $T\sp{\ast}$ is an isometry. We show that in this case the properties of being co-isometric and being unitary turn out to be equivalent.
\subsection{Some recent results}
 \label{subsec-recent-res}
Isometric multiplication operators on the Hardy spaces, weighted Bergman spaces, or weighted Dirichlet spaces were characterized in \cite{ADMV}. Characterizations of composition operators on the Dirichlet space that are unitary (which is simple) and isometric (which is more involved) were given in \cite{MV}. Isometric WCOs on weighted Bergman spaces have been recently described by Zorboska \cite{Z1}. Isometric WCOs on non-Hilbert weighted Bergman spaces were discussed in Matache's paper \cite{M}, expanding upon the classical work \cite{F}. Li \textit{et al.\/} \cite{LNNSW} studied normal WCOs on weighted Dirichlet spaces.
\par
Bourdon and Narayan \cite{BN} studied the normal WCOs and described the unitary WCOs on the Hardy space $H^2$. Le \cite{L} considered the  WCOs on a general class of weighted Hardy spaces, denoted by $\ch_\g$, whose reproducing kernel is of the form $(1-\overline{w} z)^{-\g}$ and which enjoy certain conformal properties. Besides describing when an adjoint of such an operator is again of the same type, he characterized the unitary operators among them (in the context of the unit ball), showing that this is equivalent to the property of being co-isometric. Zorboska \cite{Z2} proved several related general results for Hilbert spaces of holomorphic functions in several variables defined by certain properties of their kernels. It should also be mentioned that Hartz \cite{H} obtained several closely related results pertaining to the spaces with kernels of the type mentioned.
\subsection{Statement of our main result}
 \label{subsec-summary-res}
Focusing on the case of one variable and the unit disk, we generalize several results mentioned above by describing the co-isometric operators among the WCOs on the general weighted Hardy spaces whose kernel has the form $\sum_n \g (n) (\overline{w} z)^n$ with $\g(n)>0$. This includes many other spaces besides the $\ch_\g$ mentioned above (and also many spaces in one variable other than those satisfying the properties considered in \cite{Z2}; \textit{cf.\/} p.~156 and p.~163 there).
\par
Our main result, Theorem~\ref{thm-main}, shows that a bounded WCO in any of the spaces considered is co-isometric if and only if it is unitary, if and only if one of the following cases occurs:
\begin{itemize}
 \item
$\f$ is a disk automorphism and $F$ is determined by an explicit  formula, depending on $\f$ and the reproducing kernel, precisely when $\ch$ is one of the spaces $\ch_\g$,
 \item
for all the remaining spaces considered, the WCO must be of trivial type: $\f$ is a rotation and $F$ is a constant function of modulus one.
\end{itemize}
\par\noindent
This shows a sharp contrast between the spaces $\ch_\g$ and the remaining ones.
\par
Because of the generality considered here and the fact that the formula for the kernel is not as explicit as in the usual cases studied in the earlier papers, several standard tools are no longer available. Hence the proofs become more involved and additional arguments are needed. Actually, we split the proof into different partial results because of its length and according to the pertinent cases. An important issue in the proofs is whether the reproducing kernel is bounded (\textit{cf.\/} Theorem~\ref{thm-bded-ker}) or unbounded on the diagonal of the bidisk (\textit{cf.\/} Theorem~\ref{thm-phi-aut} and Theorem~\ref{thm-unbded-ker}).
\par
Another novel point of the present paper is the use of compositions of WCOs in order to ``move points around the disk'' (Lemma~\ref{lem-autom-change}) in order to produce a recurrent formula obtained from a power series expansion, valid on a whole interval instead of just at one point as obtained in the initial stage of the proof of Theorem~\ref{thm-unbded-ker}. This leads to an explicit identification of the spaces from the family $\ch_\g$ as the only spaces that allow non-trivial examples of co-isometric WCOs.
\subsection{Some remarks}
 \label{subsec-rks}
Since every unitary operator is invertible, results on invertibility of WCOs could in principle be relevant in this context. Most recently, in \cite{AV} two different theorems were proved, showing that in every functional Banach space in the disk that satisfies one of the two sets of five axioms listed there, a WCO is invertible if and only if its symbols $F$ and $\f$ have certain properties that one would naturally expect. However, these theorems do not apply to every natural space of analytic functions. One example of a Hilbert space that satisfies these five axioms but is not included among the spaces considered in \cite{L} is the Dirichlet space. Nonetheless, it can be seen that the invertibility results which give a lot of information on the symbols, even in this special case of the Dirichlet space, still require additional non-trivial work so as to deduce the complete information about the exact structure of $F$ and $\f$. It is precisely this work that will be done here. We will make all the proof self-contained although the methods used by Hartz \cite{H} could also give alternative proofs of some of our results.

\section{A review of weighted Hardy spaces}
 \label{sect-RKHS}

\subsection{An important family of weighted Hardy spaces}
 \label{subsec-w-hardy}
Before discussing the general weighted Hardy spaces $\ch$ defined axiomatically, we first single out among them an important family of spaces $\ch_\g$ whose reproducing kernel is given by the formula
\begin{equation}
 K_w(z)=\frac{1}{(1-\overline{w} z)^\g} = \sum_{n=0}^\infty \g (n) (\overline{w} z)^n\,, \qquad z, w \in\D\,,
 \label{eqn-spec-ker}
\end{equation}
where the sequence $(\g(n))_{n=1}^\infty$ is defined as
\begin{equation}
 \g(0)=1\,, \quad \g(1)=\g>0\,, \qquad \g(n) = \binom{\g+n-1}{n} = \frac{\G(n+\g)}{\G(\g) n!} = \frac{1}{(n+\g) B(\g,n+1)}\,, \quad n\ge 1\,.
 \label{eqn-coeff-wds}
\end{equation}
This scale of spaces is formed precisely by the following well-known spaces:
\begin{itemize}
 \item
the standard Hardy space $H^2$, choosing $\g=1$, which yields $\g(n)=1$ for all $n\ge 0$, with the standard Szeg\H{o}
(Riesz) kernel $K_w(z) = (1-\overline{w} z)^{-1}$;
 \item
the (larger) standard weighted Bergman spaces $A^2_{\g-2}$, when $\g>1$, where the usual Bergman space $A^2$ corresponds to the values $\g(n)=n+1$ for $n\ge 0$, and the kernel is the standard Bergman kernel $K_w(z) = (1-\overline{w} z)^{-2}$;
 \item
the (smaller) weighted Dirichlet spaces $\cd_{\g}$, when $\g<1$.
\end{itemize}
Note that the standard unweighted Dirichlet spaces $\cd$ of the functions with square-integrable derivative (with respect to the area measure) does not belong to this scale.
\par\smallskip
Thanks to the nice conformal properties of the spaces $\ch_\g$ (such as the one formulated in \cite[Proposition~4.3]{H}, it can be shown that whenever $\f$ is a disk automorphism (an injective analytic function from $\D$ onto itself) and $F=\mu (\f^\prime)^{\g/2}$, for some constant $\mu$ with $|\mu|=1$, the induced WCO $W_{F,\f}$ is unitary; see \cite[Proposition~3.1]{L}. Moreover, building upon previous results by Bourdon and Narayan \cite{BN}, T. Le \cite[Corollary~3.6]{L} showed that these are the only co-isometries (equivalently, the only unitary operators) among the WCOs on the spaces $\ch_\g$. While he did this in a more general context of the unit ball of $\C^n$ and only for these spaces, here we will limit our attention to the case of the unit disk but considering much more general spaces.
\par\medskip

\subsection{General weighted Hardy spaces}
 \label{subsec-weighted-H2}
Every Hilbert space $\ch$ of analytic functions in $\D$ on which all point evaluations are bounded has reproducing kernels. Given $w\in\D$, the \textit{reproducing kernel\/}  $K_w\,\colon\,\D\to\C$ is a function such that $f(w)=\langle f,K_w \rangle$ for all $f\in\ch$. Two important historical references on reproducing kernels are \cite{A} and \cite{B}. Reproducing kernels are often viewed as functions of two complex variables (defined in the bidisk $\D\times\D$) by writing $K_w(z)=K(z,w)$. Clearly, $K_w (z)=\overline{K_z (w)}$, for all $z$, $w\in\D$.
\par
Obviously, $K_z(z)=\langle K_z,K_z\rangle = \|K_z\|^2\ge 0$ for all $z\in\D$. Actually, the restriction $K_z(z)$ to the diagonal $\{(z,w)\,\colon\,z=w\}$ of the bidisk $\D \times \D$ is often a radial function (meaning that it depends on $|z|$ only). Such spaces are of special interest. In what follows, we will consider a large family of Hilbert spaces $\ch$ of analytic functions in the disk from which only the following axioms will be required:
\begin{itemize}
 \item[(\textbf{A1})]
The point evaluations are bounded (hence $\ch$ is a reproducing kernel Hilbert space);
 \item[(\textbf{A2})]
The reproducing kernel $K_w(z)$ is normalized so that $K_w(0)=1$ for all $w\in\D$;
 \item[(\textbf{A3})]
The monomials $\{z^n\,\colon\, n=0,1,2,\ldots\}$ belong to $\ch$ and form a complete orthogonal set (in the usual sense of maximal orthogonal sets in spaces with inner product).
\end{itemize}
As Proposition~\ref{prop-equiv-ax} formulated below will show, if the above conditions are fulfilled then $\ch$ will also satisfy several other conditions. Among them is the following representation of the reproducing kernel for $\ch$:
\begin{equation}
 K_w(z) = \sum_{n=0}^\infty \g (n) (\overline{w} z)^n
 \label{eqn-ker}
\end{equation}
for certain numbers $\g(n)>0$, where $\g(0)=1$. Actually, computing the inner product of the monomial $z^n$ with the kernel easily yields that
$\g (n) = \|z^n\|^{-2}$. Note also that the restriction $K_z(z) = \sum_{n=0}^\infty \g (n) |z|^{2n}$ is a positive and increasing function of $|z|$. By our normalization $(\mathbf{A2})$, we also have $\g(0) = K_w (0) = 1$; in view of $\g (0) = \|\mathbf{1}\|^{-2}$, where $\mathbf{1}$ denotes the constant function one, it also follows that $\|\mathbf{1}\| = 1$.
\par
As can be seen (and will actually be hinted in some proofs below), the representation of the kernel readily allows for the computation of \cite[Section~2.1]{CM} that gives the norm of a function in $\ch$:
$$
 \|f\|^2 =\sum_{n=0}^\infty \frac{1}{\g(n)}|a_n|^2\,, \quad \mathrm{whenever \ }  f(z)=\sum_{n=0}^\infty a_n z^n\,.
$$
The spaces $\ch$ considered here are often called \textit{weighted Hardy spaces}. These spaces and some important operators on them were studied in detail by Shields \cite{S} and later  by many followers.
\par
Regarding the notation used, it is worth mentioning that here we are centered on the role played by the kernels and therefore use mainly the numbers $\g(n)$ whereas in many other texts the emphasis is on the norm formula in terms of the Taylor coefficients, so these spaces are denoted there by $H^2(\b)$, where the obvious relationship between the numbers $\b(n)$ and $\g(n)$ is as follows:
$$
 \g(n)= \frac{1}{\beta(n)^2}\,, \qquad n\ge 0\,.
$$
We refer the reader to the standard reference \cite{CM}.

\subsection{Consequences and restatements of the axioms}
 \label{subsec-equiv-ax}
To fix the notation, the rotations will be denoted by $R_\la$; for $|\la|=1$, let $R_\la(z)=\la z$, for all $z\in\D$. The induced composition operator is denoted by $C_{R_\la}$: $C_{R_\la}f=f\circ R_\la$, for $f\in\ch$.
\par
The following simple result shows that one does not need to assume any further axioms that our spaces should satisfy in order to obtain the results that will be proved here. Moreover, it shows that under only a  minimum set of assumptions we can produce examples of unitary WCOs on our spaces.
\par
For our purpose it would actually suffice just to note that condition (a) below implies all the others (since this is all that is needed in the results that follow). This is either simple to prove or easy to find in other places; for example, (a) $\Rightarrow$ (b) is proved in \cite[Theorem~2.10]{CM}. However, we have chosen to include a detailed proof of equivalences between all the conditions (a)--(g) below because this may have some independent interest. Since this result is not easy to find in one place and some of the implications are non-trivial, for the sake of completeness we also include a detailed proof, as self-contained as possible.
\begin{prop} \label{prop-equiv-ax}
Let $\ch$ be a Hilbert space of analytic functions in $\D$ that contains all monomials and satisfies our axioms $(\mathbf{A1})$ and $(\mathbf{A2})$ from Subsection~\ref{subsec-weighted-H2}: the point evaluations are bounded on $\ch$ and the reproducing kernel is normalized so that $K_w(0)=1$ for all $w\in\D$. Then the following statements are equivalent:
\begin{itemize}
\item[(a)]
 Axiom $(\mathbf{A3})$ is fulfilled; that is, the monomials $\{z^n\,\colon\,n=0,1,2,\ldots\}$ form a complete orthogonal set in $\ch$.
\item[(b)]
 The reproducing kernel has the form \eqref{eqn-ker}: $K_w(z) = \sum_{n=0}^\infty \g (n) (\overline{w} z)^n$, with $\g (n) = \|z^n\|^{-2}$.
\item[(c)]
 The norm of a function $f\in\ch$ whose Taylor series in $\,\D$ is $f(z)=\sum_{n=0}^\infty a_n z^n$ is given by
$$
 \|f\|^2= \sum_{n=0}^\infty |a_n|^2 \|z^n\|^2\,.
$$
\item[(d)]
 The rotations $R_\la$ induce isometric composition operators $C_{R_\la}$ on $\ch$.
\item[(e)]
 The rotations $R_\la$ induce unitary composition operators $C_{R_\la}$ on $\ch$.
\item[(f)]
 The constant multipliers of modulus one ($|\mu|=1$) and rotations $R_\la$ induce unitary weighted composition operators $W_{\mu,\,R_\la}$ on $\ch$.
\item[(g)] $K_{\la w} (\la z) = K_w (z)$ for all $z$, $w\in\D$ and all $\la$ with $|\la|=1$.
\end{itemize}
\end{prop}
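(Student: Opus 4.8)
The plan is to prove the six assertions equivalent through a single cycle of implications, organised by difficulty: the equivalences (a)$\Leftrightarrow$(b)$\Leftrightarrow$(c) are reproducing-kernel bookkeeping, the chain (c)$\Rightarrow$(f)$\Rightarrow$(e)$\Rightarrow$(d) is routine, and the one genuinely substantial step is the return implication (d)$\Rightarrow$(c), in which rotation invariance must be used to recover not merely orthogonality but \emph{completeness} of the monomials.

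For the core I would argue as follows. (a)$\Rightarrow$(c): the orthogonal expansion of $f\in\ch$ converges in $\ch$, hence pointwise since evaluations are bounded, so its coefficients coincide with the Taylor coefficients $a_n$ of $f$, and Parseval gives $\|f\|^2=\sum_n|a_n|^2\|z^n\|^2$. (c)$\Rightarrow$(a): polarising the norm identity yields $\langle f,z^m\rangle=a_m\|z^m\|^2$, which gives orthogonality and, since $f\perp z^n$ for every $n$ forces all $a_n=0$ and hence $f=0$, completeness as well. For (b), with the orthonormal basis $e_n=z^n/\|z^n\|$ one sums $K_w=\sum_n\overline{e_n(w)}e_n$ to reach \eqref{eqn-ker} with $\g(n)=\|z^n\|^{-2}$; conversely, expanding $w^n=\langle z^n,K_w\rangle$ as a power series in $w$ and matching coefficients forces $\g(k)\langle z^n,z^k\rangle=\delta_{nk}$, recovering orthogonality, the identity $\|z^n\|^{-2}=\g(n)$, and again completeness.

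The operator directions are quick: under (c) the assignment $f\mapsto\mu(f\circ R_\la)$ merely multiplies the $n$-th Taylor coefficient by $\mu\la^n$, so it preserves $\sum_n|a_n|^2\|z^n\|^2$ and is invertible, hence unitary, giving (f); taking $\mu=1$ gives (e), and (e)$\Rightarrow$(d) is immediate. The crux is (d)$\Rightarrow$(c). Each $C_{R_\la}$ is an invertible isometry, so unitary with $C_{R_\la}^{\ast}=C_{R_{\overline{\la}}}$, and from $\langle z^m,z^n\rangle=\la^{m-n}\langle z^m,z^n\rangle$ for all $|\la|=1$ the monomials are orthogonal. Completeness, however, does \emph{not} follow from orthogonality alone --- adjoining to $H^2$ a multiple of $1/(1-z)$, which is orthogonal to every $z^n$, produces a reproducing-kernel space where the monomials are incomplete and, tellingly, the rotations do not even act. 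To exploit rotation invariance I would average over the circle, setting
$$
 P_nf=\frac{1}{2\pi}\int_0^{2\pi}e^{-in\th}\,C_{R_{e^{i\th}}}f\,d\th\,,
$$
and verify, by pairing with the kernel, that $(P_nf)(w)=\frac{1}{2\pi}\int_0^{2\pi}e^{-in\th}f(e^{i\th}w)\,d\th=a_nw^n$, so that $P_nf=a_nz^n$ in $\ch$. If $f$ is orthogonal to all monomials it lies in the closed rotation-invariant subspace $M^{\perp}$, where $M=\overline{\operatorname{span}}\{z^n\}$, whence $a_nz^n=P_nf\in M^{\perp}\cap M=\{0\}$ and every $a_n=0$, i.e.\ $f=0$; the identity $\langle f,z^n\rangle=a_n\|z^n\|^2$ produced en route then gives $f=\sum_na_nz^n$ in $\ch$ and the norm formula (c).

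The step I expect to be the main obstacle is justifying the averaging integral, that is, the strong continuity of $\th\mapsto C_{R_{e^{i\th}}}f$. Because completeness of the polynomials is precisely what is at issue, one cannot obtain continuity by approximating $f$ by polynomials; instead I would first establish \emph{weak} continuity from the fact that $\langle C_{R_{e^{i\th}}}f,K_w\rangle=f(e^{i\th}w)$ is continuous in $\th$ and that the kernels $\{K_w\}$ span a dense subspace of any reproducing-kernel space, and then promote it to norm continuity using $\|C_{R_{e^{i\th}}}f\|=\|f\|$. With this in hand the Bochner integral defining $P_n$ exists and the argument closes.
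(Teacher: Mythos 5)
Your proposal is correct, and it diverges from the paper's proof precisely at the step you singled out as the crux. The paper proves (a)$\Rightarrow$(b) by citing \cite{CM} and closes the loop via (b)$\Rightarrow$(a), (a)$\Rightarrow$(c)$\Rightarrow$(d)$\Rightarrow$(a), and (d)$\Rightarrow$(e)$\Rightarrow$(f)$\Rightarrow$(d); in both (b)$\Rightarrow$(a) and (d)$\Rightarrow$(a) it obtains \emph{completeness} by taking $f$ orthogonal to all monomials and computing $0=\langle f,z^m\rangle=\sum_n a_n\langle z^n,z^m\rangle=a_m\|z^m\|^2$, that is, by interchanging the inner product with the (merely pointwise convergent) Taylor series of $f$. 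As your $H^2\oplus\C\cdot(1-z)^{-1}$ example shows, this interchange is not valid in an arbitrary reproducing-kernel Hilbert space, so it tacitly depends on the standing hypotheses and deserves justification; your rotation-averaging projections $P_n$, realized as Bochner integrals after upgrading the weak continuity of $\th\mapsto C_{R_{e^{i\th}}}f$ (tested against kernels) to norm continuity via the isometry property, supply exactly that justification and make the passage from orthogonality to completeness rigorous and self-contained, at the cost of invoking vector-valued integration. The remaining pieces coincide in substance with the paper's arguments: orthogonality from $\langle\la^m z^m,\la^n z^n\rangle=\la^{m-n}\langle z^m,z^n\rangle$, the routine operator chain through (e) and (f), and the orthonormal-basis expansion of $K_w$ in place of the citation of \cite{CM}; only the order in which the cycle of implications is traversed differs. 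One small remark: your (b)$\Rightarrow$(a) (``matching coefficients'') is stated with the same informality as the paper's version of that step; it can be closed either by your own averaging device applied to $w\mapsto K_w$, which is norm-continuous when the kernel has form \eqref{eqn-ker}, or by noting that once orthogonality and $\g(n)=\|z^n\|^{-2}$ are in hand the series $\sum_n\g(n)\overline{w}^n z^n$ is Cauchy in $\ch$ and hence converges in norm to $K_w$, after which pairing with an $f$ orthogonal to all monomials forces $f\equiv 0$.
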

\begin{proof}
It suffices to prove the following chain of implications:
$$
 \mathrm{(b)} \Rightarrow \mathrm{(a)} \Rightarrow \mathrm{(c)} \Rightarrow \mathrm{(d)} \Rightarrow \mathrm{(e)} \Rightarrow \mathrm{(g)} \Rightarrow \mathrm{(b)},
$$
\par\noindent
in addition to the easy implications \  (e) $\Rightarrow$ (f) $\Rightarrow$ (d).
\par\medskip
\shadowbox{(b) $\Rightarrow$ (a):} \ Assume that the kernel can be written as $K_w(z) = \sum_{n=0}^\infty \g (n) (\overline{w} z)^n$, with $\g (n) = \|z^n\|^{-2}$. We first check that the series converges in the norm of the space $\ch$. To this end, write
$$
 p_{w,N}(z)= \sum_{n=0}^N \gamma(n) \overline w^n z^n\,, \quad z, w\in\D\,, \qquad N\ge 0\,,
$$
for the partial sums of the kernel $K_w(z)$. Note that for $a\in\D$ we have $K_a(a) = \sum_{n=0}^\infty \gamma(n) |a|^{2n}$. Then clearly the infinite series $\,\sum_{n=0}^\infty |w|^n\/$ and $\,\sum_{n=0}^\infty \gamma(n) |w|^{n}\/$ both converge for all $w\in\D$. Given $\e>0$, let $N\in\N$ be such that $\,\sum_{n>N} |w|^n<\varepsilon/2$ and $\,\sum_{n>N} \gamma(n) |w|^{n}<\varepsilon/2$. Then
\begin{eqnarray*}
 \|K_w-p_{w,N}\| &=& \left\| \sum_{n>N} \gamma(n) \overline{w}^n z^n  \right\| \leq  \sum_{n>N} \gamma(n) |w|^{n} \|z^n\|
\\
 &=& \sum_{n>N,\,\|z^n\|>1}  \gamma(n) |w|^{n} \|z^n\| + \sum_{n>N,\, \|z^n\|\le 1}  \gamma(n) |w|^{n} \|z^n\|
\\
 &\le & \sum_{n>N,\,\|z^n\|>1}  \frac{|w|^{n}}{\|z^n\|} + \sum_{n>N}  \gamma(n) |w|^{n}
\\
 &\le &\sum_{n>N}  |w|^{n} + \sum_{n>N}  \gamma(n) |w|^{n}
 <\varepsilon\,.
\end{eqnarray*}
Now, using the definition of the reproducing kernel $K_w$ (as a function of $z$), the following operation is justified for all $w\in\D$:
$$
 w^n = \langle z^n, K_w(z) \rangle = \langle z^n, \sum_{m=0}^\infty \g(m) \overline{w}^m z^m\rangle = \sum_{m=0}^\infty \overline{\g(m)} \langle z^n,z^m\rangle w^m \,, \qquad n\ge 0\,.
$$
The uniqueness of the Taylor coefficients implies $\langle z^m,z^n\rangle=0$ for all $m\neq n$. Hence the monomials form an orthogonal system.
\par
It is only left to check that this orthogonal system is complete. To this end, it suffices to note that the linear span of the kernels is dense in $\ch$: if a function is orthogonal to it, in particular it is orthogonal to each $K_w$ and hence vanishes at all $w\in\D$, so it must be the zero function. We have proved a little earlier that every kernel, and hence every function in the linear span of the kernels, can be approximated by polynomials in the norm of the space. Thus, the polynomials are dense in $\ch$ and therefore form a complete orthogonal system.
\par\medskip
\shadowbox{(a) $\Rightarrow$ (c):} \ Let $f\in\ch$, with the Taylor series $f(z)=\sum_{n=0}^\infty a_n z^n$, $z\in\D$. By assumption, the monomials form a complete orthogonal set in $\ch$. As a consequence of our Axiom~(\textbf{A1}), the development of $f$ in a series with respect to the orthonormal basis $\{z^n/\|z^n\|\,:\,n\ge 0\}$ must coincide with the Taylor series of $f$ at each point $z\in\D$. This  justifies the following operations:
$$
\|f\|^2= \langle \sum_{m=0}^\infty a_m z^m, \sum_{n=0}^\infty a_n z^n \rangle = \sum_{m=0}^\infty \sum_{n=0}^\infty a_m \overline{a_n}  \langle z^m, z^n \rangle = \sum_{n=0}^\infty |a_n|^2 \|z^n\|^2\,.
$$
\par\medskip
\shadowbox{(c) $\Rightarrow$ (d):} \ Let $\lambda$ be a complex number of modulus one. Since $C_{R_\lambda}f=f(\lambda z)$, by the norm formula from the assumption (c) we have
$$
 \|C_{R_\lambda}f\|^2 = \sum_{n=0}^\infty |\la^n a_n|^2 \|z^n\|^2 = \|f\|^2\,.
$$
\par\medskip
\shadowbox{(d) $\Rightarrow$ (e):} \ It suffices to notice that $C_{R_\la} C_{R_{\overline{\la}}} f=f$ for all $f\in\ch$, hence $C_{R_\la}$ is onto. Being a surjective isometry, it is a unitary operator.
\par\medskip
\shadowbox{(e) $\Rightarrow$ (g):} \ Let  $|\lambda|=1$. Since $C_{R_{\lambda}}$ is unitary, we know that $C_{R_{\lambda}}C^{*}_{R_{\lambda}}=I$. Moreover,
$$
 C^{*}_{R_{\lambda}}K_{w}(z)=\langle C^{*}_{R_{\lambda}}K_{w}, K_{z} \rangle=\langle K_{w}, C_{R_{\lambda}}K_{z} \rangle=\overline{\langle C_{R_{\lambda}}K_{z}, K_{w}\rangle }=\overline{C_{R_{\lambda}}K_{z}(w)}=\overline{K_{z}(\lambda w)}=K_{\lambda w}(z)\,.
$$
Hence
$$
 K_{w}(z)=C_{R_{\lambda}}C^{*}_{R_{\lambda}}K_{w}(z)=C_{R_{\lambda}}K_{\lambda w}(z)=K_{\lambda w}(\lambda z).
$$
\par\medskip
\shadowbox{(g) $\Rightarrow$ (b):} \ It is a standard Hilbert space fact that the reproducing kernel $K(z,w)=K_w(z)$ is a positive definite function (in the usual sense that the corresponding quadratic form is positive semi-definite). We also have the normalization condition, Axiom (\textbf{A2}). Condition (g) tells us that
$$
 K_{\la w}(\la z)=K_w(z)\,,\quad z, w\in\D\,,\ |\la|=1\,,
$$
and since the rotations are easily verified to be the only $\C$-linear unitary maps of $\C$. Thus, we can apply \cite[Lemma~2.2]{H} to conclude that there exists a function $h$ analytic in $\D$ such that
$$
 K_w(z)=h(\overline w z)\,,\quad h(z)=\sum_{n=0}^\infty \gamma (n) z^n\,,\quad z, w\in\D\,, \quad \gamma (0)=1\,, \ \gamma (n)\ge 0.
$$
We still ought to show that $\g(n)=\|z^n\|^{-2}$ for all $n$, so further work is needed.
\par
We follow the argument of \cite[Proposition~4.1]{GHX} and include the details for the sake of completeness. Let $I=\{n\in\N\,:\,\gamma (n)>0\}$, so that $h(z)=\sum_{n\in I} \gamma (n) z^n$. Consider the Hilbert space $H$ of analytic functions in the disk with orthonormal basis $\left\{ e_n(z) = \sqrt{\gamma (n)} z^n\,:\,n\in I\right\}$. Our next objective is to show that the spaces $H$ and $\ch$ coincide.
\par
We first show that $H$ is a reproducing kernel Hilbert space by checking that the point evaluation functionals are bounded. Let $g\in H$, with $g=\sum_{n\in I} b_n e_n$. Then, given $w\in \D$, the Cauchy-Schwarz inequality yields
$$
 |g(w)| \le \left(\sum_{n\in I} |b_n|^2\right)^{1/2} \left(\sum_{n\in I} |e_n(w)|^2\right)^{1/2}
 = \left(\sum_{n\in I} |b_n|^2\right)^{1/2} \left(\sum_{n\in I} \gamma (n) |w|^{2n}\right)^{1/2} = \sqrt{h(|w|^2)} \|g\|_H\,.
$$
\par
Next, we show that all functions in $H$ are analytic in $\D$. Since $h$ is analytic in $\D$ and has non-negative Taylor coefficients, $h(|z|)$ is an increasing function of $|z|$. Boundedness of point evaluations shows that convergence in $H$ implies uniform convergence on compact subsets of $\D$. Since the orthonormal basis of $H$ consists of polynomials, each function in $H$ is a uniform limit of polynomials on compact subsets of $\D$ and, hence, an analytic function in $\D$.
\par
Finally, we check that the reproducing kernels of $H$ and $\ch$ coincide. Denote by $E_w$ the reproducing kernel in $H$ for the point evaluation at $w\in\D$: $g(w)=\langle g, E_w \rangle_H$, for $g\in H$. If $E_w(z)=\sum_{n\in I} c_n e_n(z)$, for each $m\in I$ we have
$$
 e_m(w) = \langle e_m,E_w \rangle_H = \sum_{n\in I} \overline{c_n} \langle e_m,e_n \rangle_H = \overline{c_m}\,,
$$
hence
$$
 E_w(z) = \sum_{n\in I} \overline{e_n(w)} e_n(z) = \sum_{n\in I} \gamma (n) \overline{w}^n z^n = K_w(z)
$$
for all $z$, $w\in\D$.
\par
Finally, both $\ch$ and $H$ are Hilbert spaces with positive definite kernels that coincide. By the uniqueness part of the Moore-Aronszajn theorem \cite[p.~344,~(4)]{A}, we must have $H=\ch$.
\par
It is only left to determine the coefficients $\g (n)$ for all $n\ge 0$. For all $n\in I$ we easily compute $1 = \|e_n\|_H^2 = \|e_n\|_\ch^2 = \langle \sqrt{\gamma (n)} z^n,\sqrt{\gamma (n)} z^n \rangle_\ch = \gamma (n) \|z^n\|^2_\ch$, hence $\gamma (n)=\|z^n\|_\ch^{-2}$, as claimed.
\par
Finally, it is only left to show that all the Taylor coefficients of $h$ must be non-zero: $I=\N\cup\{0\}$. Otherwise, there exists an index $N$ such that $\gamma (N)=0$ and, since $z^N\in \ch=H$, we have
$$
 z^N = \sum_{n\in I} b_n e_n(z) = \sum_{n\in I} b_n \sqrt{\g (n)} z^n\,,
$$
with all the values $n\in I$ being different from $N$, which is clearly  impossible.
\par\medskip
\shadowbox{(e) $\Rightarrow$ (f):} \ Trivially, multiplication by a constant of modulus one yields a surjective isometry (hence a unitary operator) and the product of two unitary operators is unitary.
\par\medskip
\shadowbox{(f) $\Rightarrow$ (d):} \ This assertion follows trivially by choosing the constant multiple to be one.
\end{proof}

\section{Characterizations of co-isometric WCOs}
 \label{sect-unit-wco}
\par\medskip
\subsection{Statement of the main theorem}
 \label{subsec-main-res}
\par
We are now ready to formulate our main result.
\par
\begin{thm} \label{thm-main}
Let $\ch$ be a weighted Hardy space that satisfies axioms (\textbf{A1})--(\textbf{A3}) and let $W_{F,\f}$ be a bounded WCO on $\ch$, where $F\/$ is a function analytic in $\,\D$ and $\f\/$ an analytic map of $\,\D$ into itself. Then $W_{F,\f}$ is unitary if and only if it is co-isometric (that is, if and only if $\,W_{F,\f} W_{F,\f}\sp{\ast}=I$).
\par
Moreover, any of these two properties is further equivalent to the following:
\begin{itemize}
\item[(a)]
$\/\f$ is a disk automorphism and $F = \mu (\f^\prime)^{\g/2} = \nu \frac{K_a}{\|K_a\|}$, where $a=\f^{-1}(0)$ and $\mu$ and $\nu$ are  constants such that $|\mu|=|\nu|=1$, in the case when $\ch$ is one of the spaces $\ch_\g$ considered above.
\item[(b)]
 $\/\f$ is a rotation and $F$ is a constant function of modulus one, whenever $\ch$ does not belong to the scale of spaces $\ch_\g$.
\end{itemize}
\end{thm}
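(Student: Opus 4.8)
The plan is to compress the entire statement into one functional identity for the kernel. First I would compute the action of the adjoint on reproducing kernels, $W_{F,\f}^{\ast}K_w=\overline{F(w)}\,K_{\f(w)}$, which is immediate from the reproducing property. Because the kernels $\{K_w\}_{w\in\D}$ form a total set and $W_{F,\f}^{\ast}$ is bounded, $W_{F,\f}$ is co-isometric exactly when $W_{F,\f}^{\ast}$ preserves inner products on this set, i.e.
\begin{equation*}
 \overline{F(w)}\,F(z)\,K(\f(z),\f(w))=K(z,w),\qquad z,w\in\D. \tag{$\ast$}
\end{equation*}
From here on everything is read off from $(\ast)$; its symmetry in $z$ and $w$ is precisely what will make ``co-isometric'' and ``unitary'' coincide at the end.

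Next I would draw the elementary consequences. Setting $z=w$ in $(\ast)$ and using $K(w,w)\ge\g(0)=1>0$ shows that $F$ never vanishes. If $\f(z_1)=\f(z_2)$, dividing the two instances of $(\ast)$ forces the functions $K(z_1,\cdot)$ and $K(z_2,\cdot)$ to be proportional, and comparing Taylor coefficients gives $z_1=z_2$; hence $\f$ is univalent. Specializing $z=0$ and then $w=0$ yields both the explicit formula $F(w)=\bigl[\overline{F(0)}\,K(\f(w),\f(0))\bigr]^{-1}$ and the normalization $|F(0)|^2K(\f(0),\f(0))=1$. Eliminating $F$ from $(\ast)$ produces a pure kernel identity, with $b=\f(0)$,
\begin{equation*}
 K(b,b)\,K(\f(z),\f(w))=K(z,w)\,K(\f(z),b)\,K(b,\f(w)). \tag{$\ast\ast$}
\end{equation*}

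The dichotomy is then organized by the value $b=\f(0)$. If $b=0$, the formula for $F$ makes it a unimodular constant and $(\ast)$ collapses to $K(\f(z),\f(w))=K(z,w)$; on the diagonal this reads $\kappa(|\f(z)|^2)=\kappa(|z|^2)$ with $\kappa(t)=\sum_n\g(n)t^n$ strictly increasing, so $|\f(z)|=|z|$ and $\f$ is a rotation, which is case (b). The substantive case is $b\neq0$, where I must show that $\ch$ is forced to be one of the spaces $\ch_\g$ and that $\f$ is an automorphism. I would separate this according to whether the kernel is bounded on the diagonal. When $\sum_n\g(n)=\infty$, the diagonal form of $(\ast\ast)$ says $K(b,b)K(u,u)/|K(b,u)|^2=K(w,w)$ at $u=\f(w)$; the left side is a Cauchy--Schwarz ratio that is $\ge1$ and tends to $\infty$ only as $u\to\pd\D$ (the factor $K(b,u)$ staying bounded), so $|w|\to1$ forces $|\f(w)|\to1$. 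Thus $\f$ is inner, and a univalent inner self-map of $\D$ is a disk automorphism (Theorem~\ref{thm-phi-aut}). The bounded-kernel regime (Theorem~\ref{thm-bded-ker}), which is disjoint from the scale $\ch_\g$, has to be treated separately and shown to allow only $b=0$, i.e. rotations.

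Finally, with $\f$ an automorphism and $b\neq0$, I would identify the kernel. Applying $\pd_z\pd_{\overline{w}}\log(\cdot)$ to $(\ast\ast)$ annihilates the two single-variable factors and leaves the transformation rule $\f'(z)\,\overline{\f'(w)}\,\Phi(\f(z),\f(w))=\Phi(z,w)$, where $\Phi=\pd_z\pd_{\overline{w}}\log K$. This is one identity per automorphism, and by itself it only constrains $\Phi$ pointwise. The decisive step (Lemma~\ref{lem-autom-change}) is to note that co-isometry is preserved under taking iterates $W_{F,\f}^{\,k}$ and under composing with the rotation-induced unitaries of Proposition~\ref{prop-equiv-ax}; the automorphisms so generated sweep out a whole interval of parameters, upgrading the transformation rule to an identity valid there. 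Expanding this in a power series gives a recurrence for the coefficients $\g(n)$ whose only admissible solution is $\g(n)=\G(n+\g)/(\G(\g)\,n!)$, that is $K(z,w)=(1-z\overline{w})^{-\g}$ and $\ch=\ch_\g$; substituting $z=\f^{-1}(0)=a$ back into $(\ast)$ gives $F=\nu K_a/\|K_a\|$, and the classical conformal identity for $(1-z\overline{w})^{-\g}$ rewrites this as $\mu(\f')^{\g/2}$, which is case (a). Since the symbols in (a) and (b) do produce unitaries (via Proposition~\ref{prop-equiv-ax} and \cite[Proposition~3.1]{L}), the three properties are equivalent. I expect the real obstacle to be exactly this last paragraph: passing, through the moving-points lemma, from the pointwise curvature identity to one holding on an interval and then solving the recurrence to force the Gamma-quotient coefficients, together with the two auxiliary reductions ``univalent inner $\Rightarrow$ automorphism'' and the exclusion of $b\neq0$ in the bounded-kernel case.
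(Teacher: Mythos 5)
Your outline reproduces the paper's architecture almost exactly: your identity $(\ast)$ is the paper's \eqref{eqn-F-fi} (Proposition~\ref{prop-F}), the split according to whether $\sum_n\g(n)$ converges is the paper's split into Theorem~\ref{thm-bded-ker} and Theorem~\ref{thm-unbded-ker}, ``univalent inner $\Rightarrow$ automorphism'' is Theorem~\ref{thm-phi-aut}, and the parameter sweep is Lemma~\ref{lem-autom-change}. Two of your variants are genuinely nicer where they are correct: proving univalence by noting that $\f(z_1)=\f(z_2)$ makes $K_{z_1}$ and $K_{z_2}$ proportional and comparing Taylor coefficients is shorter than the paper's monotonicity argument, and the curvature route ($\Phi=\pd_z\pd_{\overline{w}}\log K$, yielding $\Phi(c,c)(1-|c|^2)^2=\g(1)$ for all $|c|\le|a|$ and hence an elementary ODE for $\kappa(t)=\sum_n\g(n)t^n$) is a workable alternative to the paper's recurrence $(n+1)\g(n+1)=(n+\g(1))\g(n)$, which is obtained there by differentiating two expressions for $F$ and equating them at $z=a$.

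There is, however, a genuine gap at the innerness step, and it is precisely the point where the paper has to work hardest. You claim that $K(b,b)K(u,u)/|K(b,u)|^2\to\infty$ only when $u\to\pd\D$, justifying this by ``the factor $K(b,u)$ staying bounded''. An upper bound on $K(b,\cdot)$ is the wrong estimate: the ratio also blows up when $u$ approaches a \emph{zero} of $K_b$ inside $\D$, and kernels of the form \eqref{eqn-ker} can indeed vanish --- this is exactly why the paper remarks that kernels may have zeros and cites \cite{P}. All your identity gives at a boundary point $\z$ whose radial limit satisfies $|\f(\z)|<1$ is $K_b(\f(\z))=0$, which is not yet a contradiction. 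To conclude that the set $E$ of such $\z$ has measure zero one needs the paper's additional argument: since $K_b\not\equiv 0$, the uniqueness principle forces $\f(E)$ to lie in the zero set of $K_b$ (discrete, hence countable, in $\D$), and then Privalov's theorem (a nonconstant $H^\infty$ function cannot take a constant value on a subset of $\T$ of positive measure) kills each fiber $\{\z\in\T\,\colon\,\f(\z)=a\}$, $a\in\f(E)$. Without this, or some substitute, ``$\f$ is inner, hence an automorphism'' is not established. A second omission: the exclusion of $b\neq 0$ in the bounded-kernel case is merely asserted (``has to be treated separately and shown''), whereas the paper's Theorem~\ref{thm-bded-ker} needs real work there --- continuity of the kernel up to $\overline{\D}\times\overline{\D}$, the boundary bound $|F|\ge 1$ against $|F(0)|\le 1$, and the maximum modulus principle applied to $1/F$. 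Your plan is structurally sound, but these two steps carry much of the actual content of the theorem and are missing from it.
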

We shall refer to the operators given in the case (b) as to the trivial ones. It is interesting to notice that there are results in the literature with a similar flavor, although in a somewhat different context; \textit{cf.\/}, for example, Proposition~4.3 and Corollary~9.10 of \cite{H}. The recent paper \cite{Z2} for a general class of spaces in the context of several variables (where certain conformal properties of the kernels are again assumed) contains some  related ideas and similar results.
\par
The rest of the paper is devoted to the proof of this result which we split up into a sequence of auxiliary statements in order to make it easier to follow. We begin by seeing that the assumption that $W_{F,\f}$ is a co-isometry imposes certain important properties of the symbols.
\par
\subsection{Basic information on the symbols of $W_{F,\f}$}
 \label{subsec-fla-symb}
As a preliminary information that will be needed later, we observe the following. Using the basic properties of the inner product and applying the operator to a reproducing kernel, we obtain
\begin{eqnarray*}
 \langle f, W_{F,\f}^* K_w\rangle & = & \langle W_{F,\f} f, K_w\rangle
= \langle F (f\circ \f), K_w\rangle = F(w) f(\f(w))
\\
 & = & F(w) \langle f, K_{\f(w)}\rangle = \langle f, \overline{F(w)} K_{\f(w)}\rangle\,, \qquad f\in\ch\,, \quad w\in\D\,.
\end{eqnarray*}
Hence we have the formula for the action of the adjoint of a WCO on reproducing kernels:
\begin{equation}
 W_{F,\f}^* K_w = \overline{F(w)} K_{\f(w)}\,, \qquad w\in\D\,.
 \label{eqn-adj-ker}
\end{equation}
Using this, we can show that the assumption that a WCO is co-isometric already provides some rigid information on the symbols $F$ and $\f$. As is usual, by a \textit{univalent function\/} in $\D$ we mean a function analytic in the disk which is one-to-one there.
\par
\begin{prop} \label{prop-F}
Let $\ch$ be a weighted Hardy space that satisfies axioms (\textbf{A1})--(\textbf{A3}) and let $W_{F,\f}$ be a bounded WCO on $\ch$. If $\,W_{F,\f}$ is co-isometric then $F$ is given by
\begin{equation}
 F(z) = \frac{1}{\overline{F(0)} K_{\f(0)}(\f(z))}\,, \qquad \mathrm{ \ for \ all \ } z\in\D\,,
 \label{eqn-F}
\end{equation}
and $\f$ is a univalent function.
\end{prop}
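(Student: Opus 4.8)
The plan is to derive everything from a single \emph{master identity} obtained by testing the co-isometry hypothesis on reproducing kernels. Since $W_{F,\f}W_{F,\f}^{\ast}=I$ says precisely that $W_{F,\f}^{\ast}$ is an isometry, it preserves inner products. Feeding the kernels $K_w$ and $K_v$ into this, invoking the adjoint formula \eqref{eqn-adj-ker}, and remembering that the inner product is conjugate-linear in the second slot, I would compute
$$
\langle K_w, K_v\rangle = \langle W_{F,\f}^{\ast} K_w,\, W_{F,\f}^{\ast} K_v\rangle = \langle \overline{F(w)}K_{\f(w)},\, \overline{F(v)}K_{\f(v)}\rangle = \overline{F(w)}\,F(v)\,\langle K_{\f(w)}, K_{\f(v)}\rangle.
$$
Rewriting both inner products through the reproducing property, $\langle K_w,K_v\rangle=K_w(v)$, yields the identity
$$
K_w(v) = \overline{F(w)}\,F(v)\,K_{\f(w)}(\f(v))\,, \qquad w,v\in\D\,,
$$
which drives both conclusions.

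For formula \eqref{eqn-F}, I would specialize $w=0$. By the normalization $K_0\equiv\g(0)=1$ the left-hand side collapses to $1$, so $1=\overline{F(0)}\,F(v)\,K_{\f(0)}(\f(v))$ for every $v\in\D$. One first checks that $F(0)\neq0$: taking also $v=0$ gives $1=|F(0)|^2\,K_{\f(0)}(\f(0))=|F(0)|^2\,\|K_{\f(0)}\|^2$, which forces $F(0)\neq0$ since the kernel on the diagonal is strictly positive. Solving the displayed equation for $F(v)$ then produces exactly \eqref{eqn-F}, and as a by-product shows that $F$ vanishes nowhere on $\D$.

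For univalence I would argue by contradiction, assuming $\f(a)=\f(b)$ with $a\neq b$. Putting $w=a$ and then $w=b$ (with the same $v$) into the master identity and using $\f(a)=\f(b)$, the common factor $F(v)\,K_{\f(a)}(\f(v))$ appears on the right of both; cross-multiplying by $\overline{F(b)}$ and $\overline{F(a)}$ respectively (rather than dividing, to avoid any zero of the kernels) eliminates it and leaves
$$
\overline{F(b)}\,K_a(v) = \overline{F(a)}\,K_b(v)\,, \qquad v\in\D\,.
$$
Reading off Taylor coefficients in $v$, and recalling $K_a(v)=\sum_n\g(n)\overline{a}^{\,n}v^n$, $K_b(v)=\sum_n\g(n)\overline{b}^{\,n}v^n$ with every $\g(n)>0$, the constant term gives $F(a)=F(b)\ (\neq0)$ and the coefficient of $v$ then forces $\overline{a}=\overline{b}$, i.e.\ $a=b$, a contradiction.

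The proof is short, so there is no single severe obstacle; the points requiring care are establishing $F(0)\neq0$ (which legitimizes both the quotient in \eqref{eqn-F} and the non-vanishing of $F$) and carrying out the univalence step as an identity of power series with the cross-multiplication trick, so as never to divide by a kernel value that could be zero. Everything else is routine bookkeeping with the conjugate-linearity of the inner product and the normalization $\g(0)=1$.
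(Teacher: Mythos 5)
Your proof is correct. Its first half is essentially the paper's own argument: your master identity is precisely the paper's equation \eqref{eqn-F-fi} (the paper obtains it by applying $W_{F,\f}W_{F,\f}^{\ast}=I$ to $K_w$ and evaluating at $z$, while you obtain it from the preservation of inner products by the isometry $W_{F,\f}^{\ast}$; the two computations are equivalent), and both proofs get \eqref{eqn-F} by setting $w=0$; your explicit check that $F(0)\neq 0$ is a point the paper leaves implicit (it is automatic there, since a product that is identically $1$ can have no vanishing factor). Where you genuinely diverge is the univalence step, and your route is shorter. The paper evaluates \eqref{eqn-F-fi} at the pairs $(z_1,z_1)$, $(z_2,z_2)$, $(z_1,z_2)$ to get $K_{z_1}(z_1)=K_{z_2}(z_2)=K_{z_2}(z_1)$, invokes strict monotonicity of $K_z(z)$ as a function of $|z|$ to conclude $|z_1|=|z_2|$, writes $z_2=\la z_1$ with $|\la|=1$, and finishes with a positivity argument on $\sum_{n\ge 1}\g(n)\,\mathrm{Re\,}(1-\overline{\la}^n)\,|z_1|^{2n}=0$ to force $\la=1$. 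You instead retain the identity as an equality of analytic functions of $v$: cross-multiplication gives $\overline{F(b)}\,K_a(v)=\overline{F(a)}\,K_b(v)$ for all $v\in\D$, and comparing only two Taylor coefficients (the constant term gives $F(a)=F(b)\neq 0$; the coefficient of $v$ then gives $\overline{a}=\overline{b}$ since $\g(1)>0$) yields $a=b$ at once. Your argument is more elementary, avoiding both the monotonicity of the kernel on the diagonal and the rotation/real-part reduction; each proof ultimately rests on the positivity of the coefficients $\g(n)$, with $\g(1)>0$ doing the decisive work in both, so no generality is lost by your shortcut.
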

\begin{proof}
Since by assumption $W_{F,\f} W_{F,\f}^* = I$, using \eqref{eqn-adj-ker}, we obtain
$$
 K_w = W_{F,\f} W_{F,\f}^* K_w = W_{F,\f} \(\overline{F(w)} K_{\f(w)}\) = \overline{F(w)} W_{F,\f} K_{\f(w)} = \overline{F(w)} F (K_{\f(w)}\circ\f)\,.
$$
Thus,
\begin{equation}
 F(z) \overline{F(w)} K_{\f(w)}(\f(z)) = K_w(z)\,, \qquad \mathrm{ \ for \ all \ } z,w\in\D\,.
 \label{eqn-F-fi}
\end{equation}
Taking $w=0$, the right-hand side is $\equiv 1$, so \eqref{eqn-F} follows immediately.
\par\medskip
We now show that $\f$ is univalent. To this end, let $z_1$, $z_2\in\D$ be such that $\f(z_1)=\f(z_2)$. Then it follows that $F(z_1)=F(z_2)$: indeed, by \eqref{eqn-F} we have
$$
 F(z_1) =\frac{1}{\overline{F(0)} K_{\f(0)}(\f(z_1))} = \frac{1}{\overline{F(0)} K_{\f(0)}(\f(z_2))} = F(z_2)\,.
$$
Next, using \eqref{eqn-F-fi} three times but with different values of $z$ and $w$, we get
\begin{eqnarray*}
 K_{z_1}(z_1) &=& |F(z_1)|^2 K_{\f(z_1)}(\f(z_1))\,,
\\
 K_{z_2}(z_2) &=& |F(z_2)|^2 K_{\f(z_2)}(\f(z_2))\,,
\\
 K_{z_2}(z_1) &=& F(z_1) \overline{F(z_2)} K_{\f(z_2)}(\f(z_1))\,.
\end{eqnarray*}
By our choice of $z_1$ and $z_2$, the right-hand sides of the last three equations all coincide, hence
$$
 K_{z_1}(z_1) = K_{z_2}(z_2) = K_{z_2}(z_1) \,.
$$
Recall that for any kernel given by \eqref{eqn-ker} the function $K_z(z)$ is a strictly increasing function of $|z|$, hence from the first equality above it follows that $|z_1|=|z_2|$. If $z_1=0$, it follows that also $z_2=0$ and we are done. Thus, we may assume that $|z_1|>0$.
\par
Next, writing $z_2=\la z_1$ with $|\la|=1$ and using the remaining equality and equation \eqref{eqn-ker}, we obtain
$$
 \sum_{n=1}^\infty \g (n) |z_1|^{2n} = \sum_{n=1}^\infty \g (n) \overline{\la}^n |z_1|^{2n}\,,
$$
which yields
$$
 \sum_{n=1}^\infty \g (n)\, \mathrm{Re\,} (1-\overline{\la}^n) \, |z_1|^{2n} = 0\,.
$$
Since each term in the sum on the left is non-negative and $\g(n)>0$ and $|z_1|>0$, it follows that $$
 \mathrm{Re\,} (1-\overline{\la}^n) = 0
$$
for all $n\ge 1$, which implies $\la=1$, hence $z_1=z_2$. This proves  that $\f$ is univalent.
\end{proof}
\par
It should be noted that formula \eqref{eqn-F-fi} has already appeared before in the literature; see Zorboska \cite[Proposition~1]{Z2}.

\subsection{Kernels bounded on the diagonal}
 \label{subsec-pf-ker-bded-diag}
Proposition~\ref{prop-F} proved above will help us to handle the simpler case of the kernel bounded on the diagonal. We will refer to $\{(z,w)\in\D\times\D\,\colon\,z=w\}$ as the \textit{diagonal\/} of the bidisk $\D\times\D$. It  will be relevant to our proofs to distinguish between the kernels that are bounded on the diagonal and those that are not.
\par
A simple example of the kernel of type \eqref{eqn-ker} which is bounded on the diagonal is
$$
 K_w(z) = 1 + \overline{w} z + \sum_{n=2}^\infty \frac{\overline{w} z}{n (n-1)} = 1 + 2 \overline{w} z - (1- \overline{w} z) \log \frac{1}{1- \overline{w} z}\,.
$$
\par
In relation to an argument mentioned in the proof below, it is convenient to recall that reproducing kernels may have zeros in the bidisk (on or off the diagonal); this question is relevant in the theory of one and several complex variables. In relation to the kernels considered here, we mention the recent reference \cite{P}.
\par\medskip
\begin{thm} \label{thm-bded-ker}
Let $\ch$ be a weighted Hardy space that satisfies axioms (\textbf{A1})--(\textbf{A3}) and whose reproducing kernel is bounded on the diagonal of the bidisk, and let $W_{F,\f}$ be a bounded WCO on $\ch$. Then the following statements are equivalent:
\par
(a) $W_{F,\f}$ is unitary.
\par
(b) $\,W_{F,\f}$ is co-isometric.
\par
(c)
$F$ is a constant function of modulus one and $\f$ is a rotation.
\end{thm}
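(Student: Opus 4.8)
The plan is to prove the cycle (c)~$\Rightarrow$~(a)~$\Rightarrow$~(b)~$\Rightarrow$~(c). The implication (c)~$\Rightarrow$~(a) is immediate from Proposition~\ref{prop-equiv-ax}(f), and (a)~$\Rightarrow$~(b) is trivial since every unitary operator is co-isometric. For (b)~$\Rightarrow$~(a) I would first observe that $W_{F,\f}$ is automatically \emph{invertible}: being co-isometric it is onto, and it is injective because $F$ never vanishes (by \eqref{eqn-F}) while a function vanishing on the open set $\f(\D)$ vanishes identically, so $W_{F,\f}f=F\,(f\circ\f)=0$ forces $f\equiv 0$. (Here $\f$ is non-constant, as a constant $\f$ would make the operator of rank one, hence not onto.) A bounded bijection of a Hilbert space has a bounded inverse, so $W_{F,\f}^{\ast}=W_{F,\f}^{-1}$ and hence $W_{F,\f}^{\ast}W_{F,\f}=I$ as well; thus $W_{F,\f}$ is unitary. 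I note that this part requires no boundedness hypothesis.

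The heart of the matter is (b)~$\Rightarrow$~(c), which I would \emph{reduce to showing that} $\f(0)=0$. Indeed, by Proposition~\ref{prop-F}, $\f$ is univalent and \eqref{eqn-F-fi} holds. If $\f(0)=0$, then $K_{\f(0)}=K_0\equiv 1$, so \eqref{eqn-F} forces $F$ to be the constant $1/\overline{F(0)}$, of modulus one (since $|F(0)|^2K_0(0)=1$); and then \eqref{eqn-F-fi} collapses to $K_{\f(w)}(\f(z))=K_w(z)$. Comparing the coefficients of $z\,\overline{w}$ on both sides, where by \eqref{eqn-ker} only the $n=1$ term contributes (because $\f(z)^n$ vanishes to order $n$ at the origin), yields $\g(1)|\f'(0)|^2=\g(1)$, i.e. $|\f'(0)|=1$; the Schwarz lemma then gives $\f(z)=\f'(0)z$, a rotation.

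So everything comes down to proving $\f(0)=0$, and this is where the boundedness on the diagonal must enter. My approach would be geometric. Since $F$ is zero-free, $\log|F|$ is harmonic on $\D$; putting $z=w$ in \eqref{eqn-F-fi} gives $2\log|F(z)|=\log K_z(z)-\log K_{\f(z)}(\f(z))$. Applying the Laplacian and using the identity $\Delta(u\circ\f)=(\Delta u)\circ\f\cdot|\f'|^2$ (valid for any $C^2$ function $u$ and holomorphic $\f$, as $\Delta=4\pd\bar\pd$) to $u(\zeta)=\log K_\zeta(\zeta)$, the harmonic left-hand side drops out and I obtain $\omega(z)=\omega(\f(z))\,|\f'(z)|^2$, where $\omega:=\Delta\log K_z(z)$. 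This $\omega$ is a strictly positive, radial, real-analytic function (positivity is the non-degeneracy of the associated Bergman-type metric, which holds because all $\g(n)>0$), so the relation says precisely that the univalent $\f$ is a holomorphic isometry of the rotation-invariant conformal metric $\omega(z)\,|dz|^2$.

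The main obstacle I anticipate is the rigidity step: ruling out an isometry with $\f(0)=a\neq 0$. Here I would argue that the rotations $R_\la$ are isometries fixing the origin, so they sit in the stabilizer of $0$ in the isometry group $\mathcal G$; were $0$ moved to some $a\neq0$, the maps $R_\la\circ\f$ would carry $0$ to the whole circle $|\zeta|=|a|$, making the $\mathcal G$-orbit of $0$ at least one-dimensional. With the one-dimensional stabilizer this forces $\dim\mathcal G\ge 2$; but the isometry group of a surface has dimension $0$, $1$, or $3$, never $2$, so $\dim\mathcal G=3$ and the metric has constant curvature. A rotation-invariant conformal metric of constant curvature on $\D$, normalized by $K_0(0)=1$, must be the Bergman-type metric of one of the spaces $\ch_\g$ (the hyperbolic case; the flat and spherical cases are incompatible with having $\g(n)>0$ for all $n$ and $K_z(z)$ strictly increasing). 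Since our kernel is bounded on the diagonal we have $\sum_n\g(n)<\infty$, so $\ch$ is \emph{not} any $\ch_\g$, a contradiction; hence $\f(0)=0$. I expect this translation of ``bounded on the diagonal'' into ``not a space of constant curvature,'' together with the exclusion of a two-dimensional isometry group, to be the delicate point. A more elementary alternative worth pursuing would exploit that $\sum_n\g(n)<\infty$ makes $K$ continuous on $\overline{\D}\times\overline{\D}$ and every $f\in\ch$ (hence $\f$ itself) extend continuously to $\overline{\D}$, and then analyze \eqref{eqn-F-fi} on the boundary.
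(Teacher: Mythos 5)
Your easy implications are fine, your direct argument that a co-isometry is automatically unitary (onto since $W_{F,\f}W_{F,\f}^{\ast}=I$; injective since $F$ is zero-free by \eqref{eqn-F} and $\f$ is univalent by Proposition~\ref{prop-F}, so the bounded inverse theorem gives $W_{F,\f}^{\ast}=W_{F,\f}^{-1}$) is correct and pleasantly general, and the reduction of (b)$\Rightarrow$(c) to proving $\f(0)=0$ (coefficient of $z\overline{w}$ plus the Schwarz lemma) is valid. The gap is in the geometric rigidity step, and as written it is fatal. Your concluding claim --- that a rotation-invariant, normalized, constant-curvature metric $\omega\,|dz|^2$ with $\omega=\Delta\log K_z(z)$ must be the metric of one of the spaces $\ch_\g$, so that constant curvature contradicts $\sum_n\g(n)<\infty$ --- is false. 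Two counterexamples: the Fock-type kernel $K_w(z)=e^{\overline{w}z}$ has $\g(n)=1/n!>0$, is normalized and bounded on the diagonal, and its metric $\omega\equiv 4$ is \emph{flat} (so the flat case is perfectly compatible with $\g(n)>0$ and $K_z(z)$ increasing, contrary to what you assert); and the dilated kernels $K_w(z)=(1-\overline{w}z/R^2)^{-\g}$ with $R>1$ have coefficients $\g(n)R^{-2n}>0$, are bounded on the diagonal, and their metric $4\g R^2(R^2-|z|^2)^{-2}|dz|^2$ has constant curvature $-1/\g$. What you have conflated is ``constant negative curvature metric on $\D$'' with ``\emph{the} complete Poincar\'e metric of $\D$'': the incomplete constant-curvature metrics correspond exactly to kernels bounded on the diagonal, so reaching ``constant curvature'' produces no contradiction with boundedness, and your proof cannot conclude. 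Distinguishing $\ch_\g$ requires completeness (equivalently homogeneity) of the metric, which your argument does not deliver.

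There are two further problems feeding into this. First, Proposition~\ref{prop-F} gives only that $\f$ is \emph{univalent}, not surjective, so $\f$ and the maps $R_\mu\f R_\la$ are isometric embeddings of $(\D,\omega|dz|^2)$ into itself, not elements of the isometry group; Myers--Steenrod and the orbit--stabilizer count apply to the group of \emph{global} self-isometries, which for all you know consists only of rotations. (In the bounded-diagonal case surjectivity of $\f$ is not available in advance --- the paper obtains it in the unbounded case via inner-function theory, and here it is part of the conclusion, not a tool.) Second, the blanket statement that a surface isometry group never has dimension $2$ is false (flat torus); it holds when some point has a one-dimensional stabilizer, but that needs proof. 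Parts of this are repairable --- constant curvature follows more cheaply from $\kappa\circ\f=\kappa$, radiality of $\kappa$, and real-analyticity, with no group theory at all, and the incomplete cases can then be killed by M\"obius rigidity of local isometries --- but none of that is in your text, and the step you yourself flagged as delicate is exactly where the argument breaks. Finally, the ``more elementary alternative'' of your last sentence is in fact the paper's proof: $\sum_n\g(n)<\infty$ makes $K$ continuous on $\overline{\D}\times\overline{\D}$, hence $F$ is bounded (via $|F(z)|\le\|F\|\sqrt{K_z(z)}$) and, by \eqref{eqn-F}, continuous on $\overline{\D}$; then $|F|\ge 1$ on $\T$ while $|F(0)|^2=1/K_{\f(0)}(\f(0))\le 1$, so the maximum principle applied to $1/F$ forces $F$ to be a unimodular constant, and $K_z(z)=K_{\f(z)}(\f(z))$ then forces $|\f(z)|=|z|$, i.e.\ $\f$ is a rotation. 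One caveat on your sketch of that route: $\f$ need not belong to $\ch$, so you cannot conclude that $\f$ extends continuously to $\overline{\D}$ this way --- but the paper's argument never needs it.
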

\begin{proof}
Trivially, (a) implies (b). It is clear from Proposition~\ref{prop-equiv-ax} that (c) implies (a). Thus, it only remains to show that (b) implies (c).
\par
Suppose that $\,W_{F,\f}$ is co-isometric. The assumption that the kernel is bounded on the diagonal of the bidisk is equivalent to saying that $\sum_{n=0}^\infty \g (n) < +\infty$. The Weierstrass test and formula \eqref{eqn-ker} readily imply that the kernel extends continuously to the closed bidisk $\overline{\D}\times\overline{\D}$. Since in principle the kernel  could have zeros, in view of \eqref{eqn-F} we need an additional argument in order to show that $F$ extends continuously to $\overline{\D}$. This can be seen as follows: for every fixed $z\in\D$, the function $K_{z}$ is continuous in $\overline{\D}$. Note also that
$$
 |F(z)| = |\langle F,K_z\rangle| \le \|F\| \|K_z\| = \|F\| \sqrt{K_z(z)} \le \|F\| \max_{\z\in\overline{\D}} \sqrt{K_\z(\z)} \,,
$$
for all $z\in\D$, hence $F$ is bounded in the disk and therefore also in $\overline{\D}$. Equation \eqref{eqn-F} implies that $K_{\f(0)}(\f(z))$ is bounded away from zero in the disk and, since it is continuous in $\overline{\D}$, it is also bounded away from zero in the closed disk. This, together with \eqref{eqn-F}, shows that $F$ is continuous in $\overline{\D}$.
\par
In view of Proposition~\ref{prop-F}, setting $w=z$ in \eqref{eqn-F-fi}, we know that
\begin{equation}
  |F(z)|^2 = \frac{K_z(z)}{K_{\f(z)}(\f(z))}\,, \qquad z\in\D\,.
  \label{eqn-F2}
\end{equation}
Let $\T=\{z\,:\,|z|=1\}$ denote the unit circle. Since $\f$ is  analytic in $\D$ and bounded by one, the finite radial limits $\f(\z)=\lim_{r\to 1^-} \f(r\z)$ exist and also satisfy $|\f(\z)|\le 1$ for almost every point $\z\in\T$ with respect to the normalized Lebesgue arc length measure on $\T$ \cite{D, Ko}. If $\z\in\T$ is a point where $\f(\z)$ exists, by letting $z\to\z$ radially inside the disk and taking into account that $|\f(\z)|\le 1=|\z|$ and $K_z(z)$ is an increasing function of $|z|$, we see immediately that $|F(\z)|\ge 1$. Hence $|1/F|\le 1$ almost everywhere on $\T$ and therefore $|1/F|\le 1$ in $\D$ (by standard Hardy space arguments). On the other hand, from \eqref{eqn-F2} we get
$$
 |F(0)|^2 = \frac{1}{K_{\f(0)}(\f(0))} \le 1\,,
$$
again because $K_z(z)$ is an increasing function of $|z|$. The maximum modulus principle applied to $1/F$ implies that $F$ is identically constant and has modulus one.
\par
In view of \eqref{eqn-F2} we have $K_z(z)=K_{\f(z)}(\f(z))$ for all $z\in\D$. Taking into account the form \eqref{eqn-ker} of the kernel, this means that
$$
 1 + \sum_{n=1}^\infty \g (n) |z|^{2n} = 1 + \sum_{n=1}^\infty \g (n) |\f(z)|^{2n}\,.
$$
Since $1+\sum_{n=1}^\infty \g (n) r^{2n}$ is a strictly increasing function of $r$, it follows that equality above is possible if and only if $|\f(z)|=|z|$, for every $z\in\D$. But this implies that $\f$ is a rotation.
\end{proof}
\par
\par
\subsection{Kernels unbounded on the diagonal}
 \label{subsec-pf-ker-unbded-diag}
\par
For the kernels of the general form \eqref{eqn-ker} considered here, the assumption that $K_w(z)$ is unbounded on the diagonal obviously means that $\lim_{|z| \to 1^-} K_z(z) = +\infty$, which is easily seen to be  equivalent to $\sum_{n=0}^\infty \g (n) = +\infty$.
\par
It is relevant to note that for any of the spaces $\ch_\g$ defined earlier for which \eqref{eqn-coeff-wds} is satisfied, the reproducing kernel is always unbounded on the diagonal since $\g(1)>0$. An example of a space $\ch$ not in the family $\ch_\g$ and whose kernel is unbounded on the diagonal is the classical Dirichlet space (renormed) with
$$
 \g(n)=\frac{1}{n+1}\,, \qquad K_w(z) = \frac{1}{\overline{w} z} \log \frac{1}{1 - \overline{w} z}\,.
$$
Indeed, it can be checked that \eqref{eqn-coeff-wds} is not fulfilled in this case.
\par
We will see that, unlike in the previous case, for certain kernels of this type non-trivial unitary WCOs will exist.
\par\medskip
\begin{thm} \label{thm-phi-aut}
Let $\ch$ be a weighted Hardy space that satisfies axioms (\textbf{A1})--(\textbf{A3}) and whose reproducing kernel is unbounded on the diagonal. If $\,W_{F,\f}$ is co-isometric on $\ch$ then $\f$ is a disk automorphism.
\par
In the case when $\f(0)=0$ (in particular, whenever $\,\f\/$ is a rotation), $F$ must be a constant of modulus one and the induced operator  $\,W_{F,\f}$ is unitary on $\ch$.
\end{thm}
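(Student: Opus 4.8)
The plan is to prove the two assertions separately, handling the case $\f(0)=0$ quickly and concentrating the effort on the surjectivity of $\f$. For the case $\f(0)=0$: here $K_{\f(0)}=K_0\equiv\g(0)=1$, so \eqref{eqn-F} collapses to $F\equiv 1/\overline{F(0)}$, a constant; evaluating at the origin gives $|F(0)|^2=1/K_0(0)=1$, so $F$ is a unimodular constant. Setting $w=z$ in \eqref{eqn-F-fi} (that is, using \eqref{eqn-F2}) then yields $K_z(z)=K_{\f(z)}(\f(z))$ for all $z$, and since $K_z(z)=\sum_n\g(n)|z|^{2n}$ is strictly increasing in $|z|$ this forces $|\f(z)|=|z|$; hence $\f(z)/z$ is a unimodular analytic function, so by the maximum modulus principle a unimodular constant, and $\f$ is a rotation. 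Unitarity of $W_{F,\f}$ then follows from Proposition~\ref{prop-equiv-ax}(f).

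The substance of the theorem is the surjectivity of $\f$ (univalence is already granted by Proposition~\ref{prop-F}). Write $a=\f(0)$ and $\Omega=\f(\D)$; since $\f$ is univalent, $\Omega$ is a simply connected subdomain of $\D$. First I would record that $K_a$ is bounded and continuous on $\overline{\D}$: as $|a|<1$ and the kernel series in \eqref{eqn-ker} has radius of convergence at least one, $\sum_n\g(n)|a|^n<\infty$, so $K_a(z)=\sum_n\g(n)\overline{a}^{\,n}z^n$ converges uniformly on $\overline{\D}$. Inserting this into \eqref{eqn-F} gives $|F(z)|=1/\bigl(|F(0)|\,|K_a(\f(z))|\bigr)\ge 1/\bigl(|F(0)|\sum_n\g(n)|a|^n\bigr)>0$, so $F$ is bounded away from zero.

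Now I would suppose, for contradiction, that $\Omega\ne\D$, so that $\partial\Omega\cap\D\ne\emptyset$; pick $b$ in this set and a sequence $z_n\in\D$ with $\f(z_n)\to b$. Since $b\notin\Omega$ and $\f$ is continuous, $(z_n)$ can have no limit point inside $\D$, hence $|z_n|\to 1$. By \eqref{eqn-F2} we have $K_{z_n}(z_n)=|F(z_n)|^2K_{\f(z_n)}(\f(z_n))$, where the left side tends to $+\infty$ (the kernel is unbounded on the diagonal, equivalently $\sum_n\g(n)=+\infty$), while $K_{\f(z_n)}(\f(z_n))\to K_b(b)<\infty$ because $|b|<1$. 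Thus $|F(z_n)|\to\infty$. On the other hand, \eqref{eqn-F} together with the continuity of $K_a$ on $\overline{\D}$ gives $|F(z_n)|\to 1/\bigl(|F(0)|\,|K_a(b)|\bigr)$, so necessarily $K_a(b)=0$. Therefore every point of $\partial\Omega\cap\D$ is a zero of $K_a$.

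Converting this into a genuine contradiction is the step I expect to be the main obstacle, precisely because the reproducing kernels of these general spaces may actually vanish (cf.\ the remark preceding this theorem and \cite{P}), so one cannot simply assert that $K_a$ is zero-free. The resolution is a dichotomy of sizes: $K_a\not\equiv 0$ (since $K_a(a)=\|K_a\|^2>0$), hence its zero set is discrete in $\D$, whereas $\partial\Omega\cap\D$ has no isolated points. Indeed, if some $b$ were isolated in $\partial\Omega$, then a small punctured disk about $b$ would be disjoint from $\partial\Omega$ and connected, hence contained in $\Omega$ (the alternative would contradict $b\in\partial\Omega$); but then $\Omega$ would have a puncture at $b$ and fail to be simply connected, contradicting the univalence of $\f$. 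So $\partial\Omega\cap\D$ cannot lie inside the discrete zero set of $K_a$, a contradiction. Consequently $\partial\Omega\cap\D=\emptyset$, so $\Omega$ is both open and relatively closed in the connected set $\D$, giving $\Omega=\D$; that is, $\f$ is a disk automorphism.
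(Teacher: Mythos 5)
Your proof is correct, but it takes a genuinely different route from the paper's. The paper proves that $\f$ is an automorphism by showing it is a univalent \emph{inner} function: it considers the boundary set $E=\{\z\in\T\,\colon\,|\f(\z)|<1\}$, uses \eqref{eqn-F2} and \eqref{eqn-F} to show that $K_{\f(0)}$ vanishes on $\f(E)$, rules out $m(E)>0$ by combining Privalov's theorem (level sets of a nonconstant $H^\infty$ function have measure zero on $\T$) with a cardinality/accumulation-point argument and the identity principle, and finally invokes $H^p$ factorization theory (or Frostman's theorem) to conclude that a univalent inner function is an automorphism. You instead argue entirely inside the disk: the same key computation (applied to a sequence $\f(z_n)\to b$ forcing $|z_n|\to 1$) shows that every point of $\pd\f(\D)\cap\D$ is a zero of $K_{\f(0)}$, and you then obtain a contradiction by playing the discreteness of the zero set of $K_{\f(0)}\not\equiv 0$ against the fact that the simply connected domain $\f(\D)$ (univalent image of $\D$) can have no isolated boundary point inside $\D$ --- a loop around such a puncture would have winding number one, contradicting simple connectivity. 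Both proofs must confront the possibility that the kernel vanishes somewhere (the remark before the theorem and \cite{P}); the paper handles it with measure theory plus the uniqueness principle, you with a purely topological dichotomy, avoiding radial limits, inner functions, and Privalov altogether, which makes your argument more elementary and self-contained. A further small difference: your treatment of the case $\f(0)=0$ is independent of the first part --- you derive $|\f(z)|=|z|$ from \eqref{eqn-F2} and conclude that $\f$ is a rotation via the maximum modulus principle --- whereas the paper's closing paragraph implicitly relies on the already-established fact that $\f$ is an automorphism (so that fixing the origin makes it a rotation) before applying Proposition~\ref{prop-equiv-ax}.
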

\begin{proof}
We recall that an inner function is a bounded function with radial limits of modulus one almost everywhere on the unit circle $\T$. From the basic factorization theory of Hardy spaces \cite[Chapter~2 and Theorem~3.17]{D}, it follows that a univalent inner function must be a disk automorphism; note that this can also be concluded by post-composing with disk automorphisms and invoking Frostman's theorem. We already know from Theorem~\ref{thm-bded-ker} that $\f$ is univalent. Thus, it suffices to show that it is also an inner function.
\par
We certainly know that $\f$ is an analytic self-map of $\D$ so it must have radial limits almost everywhere and these limits have modulus at most one. Consider the set
$$
 E = \{\z\in\T\,\colon\,|\f(\z)|<1\}
$$
and show that its arc length measure is $m(E)=0$. Let us look again at formula \eqref{eqn-F2}. If $\z\in E$, since the kernel is unbounded on the diagonal, we have $\lim_{z\to\z} K_z(z)=+\infty$. On the other hand, $\f(\z)\in\D$ by our definition of $E$, hence the value $K_{\f(\z)}(\f(\z))$ is defined and finite. It follows that $F(z)\to\infty$ as $z\to\z$. Now in view of \eqref{eqn-F}, we obtain $K_{\f(0)}(\f(\z)) =0$. Since $\z\in E$ was arbitrary, this holds for all $\z\in E$.
\par
Now assume the contrary to our assumption: $m(E)>0$. First note that, by our definition of $E$, the set $\f(E)$ is contained in $\D$ and clearly
\begin{equation}
 E \subset \bigcup_{a\in\f(E)} \{\z\in\T\,\colon\,\f(\z)=a\}\,.
 \label{eqn-union}
\end{equation}
Next, we claim that $m(\{\z\in\T\,\colon\,\f(\z)=a\})=0$, for every $a\in\f(E)$. We know that $\f$ is univalent, hence it cannot be identically constant. And since $\f\in H^\infty$, it is impossible for $\f(\z)=a$ to hold on a set of positive measure on $\T$ by a theorem of Privalov (see \cite[Theorem~2.2]{D} or \cite[Chapter~III]{Ko} for different versions of it). This proves the claim.
\par
Now we can distinguish between two cases, depending on the cardinality of the set $\f(E)$. If $\f(E)$ is countable then $m(E)=0$ by \eqref{eqn-union}, as claimed. And if $\f(E)$ is uncountable, we argue as follows. The set $\f(E)$ is contained in $\D$ and has at least one accumulation point in $\D$. (Otherwise, each compact disk $D_n = \{z\,\colon\, |z|\le 1 - \frac{1}{n}\}$, $n\in\N$, would contain only finitely many points of $\f(E)$ and since $\D=\cup_{n=1}^\infty D_n$, the set $\f(E)$ would be countable.) But, as we have noticed above, the analytic function $K_{\f(0)}$ vanishes on $\f(E)$ and is therefore identically zero in $\D$ by the uniqueness principle, which is impossible.
\par
Thus, we conclude that $m(E)=0$, hence $\f$ is an inner function. This completes the proof that $\f$ is an automorphism.
\par\smallskip
As for the final part of the statement, if $\f(0)=0$ the function $F$ must be a constant of modulus one: indeed, equation \eqref{eqn-F} together with $K_0(z)\equiv 1$ shows that $F\equiv 1/\overline{F(0)}$. Writing $F\equiv\la$, it is immediate that $|\la|=1$. Now, it follows directly from Proposition~\ref{prop-equiv-ax} that the induced operator  $\,W_{\la,\f}$ is actually unitary.
\end{proof}
\par\smallskip
In what follows we will rely on the properties of the disk automorphisms. One basic type of automorphisms are the rotations  $R_\la$, where $|\la|=1$. Since our spaces are supposed to satisfy the axioms listed in Lemma~\ref{prop-equiv-ax}, it follows that the induced composition operators $C_{R_\la}$ are unitary on $\ch$. The other basic type of automorphisms are the maps $\vf_a(z)=(a-z)/(1-\overline{a}z)$, $a\in\D$; such an automorphism is an involution and exchanges the point $a$ and the origin. As is well-known, every disk automorphism $\f$ is of the form $\f=R_\la \vf_a$. To make the notation more compact, we will write $\vf_{\la,a} = R_\la \vf_a$.
\par
The following lemma will be fundamental in proving the last theorem of this paper. It will allow us to change from one co-isometric WCO to another operator of the same kind (acting on the same space) in a convenient way. We give an elementary proof below. However, it is convenient to remark that a stronger version of the statement could be derived from \cite[Proposition~9.9]{H}.
\par\medskip
\begin{lem} \label{lem-autom-change}
Let $\ch$ be a weighted Hardy space that satisfies axioms (\textbf{A1})--(\textbf{A3}) and let $W_{F,\f}$ be a co-isometric WCO on $\ch$ such that $\f= \vf_{\la,a}$, for some $\la$ with $|\la|=1$, $a\in\D$. Then for all $b\in [0,|a|]$ there exists a point $c\in\D$ with  $|c|=b$, a constant $\mu$ with $|\mu|=1$, and an analytic function $G$ in the disk such that $W_{G, \vf_{\mu,c}}$ is also a co-isometric WCO on $\ch$.
\end{lem}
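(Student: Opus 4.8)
The plan is to build the required co-isometry as a product of co-isometric WCOs obtained from $W_{F,\f}$ by conjugating with the unitary rotation operators furnished by Proposition~\ref{prop-equiv-ax}. First I would record two elementary stability facts. The product of two co-isometries is again a co-isometry, since $(AB)(AB)^\ast = A(BB^\ast)A^\ast = AA^\ast = I$; and if $T$ is co-isometric and $U,V$ are unitary then $UTV$ is co-isometric. Because every $C_{R_s}$ with $|s|=1$ is unitary on $\ch$ by part (e) of Proposition~\ref{prop-equiv-ax}, any operator produced from $W_{F,\f}$ by composing on either side with such rotation operators, together with any product of such operators, is automatically co-isometric. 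The whole task then reduces to choosing a combination whose composition symbol is an automorphism whose centre has modulus exactly $b$. We may assume $|a|>0$, the case $a=0$ being trivial.

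Next I would introduce, for each $s\in\T$, the conjugated operator $T_s := C_{R_s}\,W_{F,\f}\,C_{R_{\overline s}}$. Using the composition rule $W_{F_1,\f_1}W_{F_2,\f_2}=W_{F_1\cdot(F_2\circ\f_1),\,\f_2\circ\f_1}$ together with the identity $\vf_a\circ R_s=R_s\circ\vf_{\overline s a}$, a short computation gives $T_s=W_{F\circ R_s,\,\vf_{\la,\overline s a}}$, a co-isometric WCO whose composition symbol $\vf_{\la,\overline s a}$ still has centre of modulus $|a|$. The key move is then to form the product $T_s\,W_{F,\f}$, which is co-isometric and, by the same composition rule, equals $W_{G,\chi_s}$, where $G=(F\circ R_s)\cdot(F\circ\vf_{\la,\overline s a})$ is analytic and not identically zero, and $\chi_s=\vf_{\la,a}\circ\vf_{\la,\overline s a}$. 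Being a composition of two automorphisms, $\chi_s$ is itself an automorphism, hence of the form $\vf_{\mu,c}$ with $|\mu|=1$ and $c=\chi_s^{-1}(0)$.

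It remains to show that the centre $c=c(s)$ sweeps out all the prescribed moduli. Solving $\chi_s(c)=0$, that is $\vf_{\la,\overline s a}(c)=a$, yields the explicit value $c=a(\overline s-\overline\la)/(1-s\overline\la|a|^2)$, so that with $v:=s\overline\la\in\T$ one finds $|c(s)|=|a|\,|v-1|/|1-|a|^2 v|$. Writing $v=e^{i\theta}$, this modulus is a continuous function of $\theta$ equal to $0$ at $\theta=0$ and to $2|a|/(1+|a|^2)$ at $\theta=\pi$; since $2|a|/(1+|a|^2)\ge|a|$, the intermediate value theorem guarantees that for every $b\in[0,|a|]$ there is a rotation $R_s$ with $|c(s)|=b$. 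Taking $\mu$ and $c=c(s)$ to be the data of $\chi_s$ and $G$ as above then produces a co-isometric WCO $W_{G,\vf_{\mu,c}}$ with $|c|=b$, as required.

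I expect the main obstacle to be conceptual rather than computational: the naive manipulations fail. Composing $W_{F,\f}$ with a single rotation operator leaves the centre modulus pinned at $|a|$, and passing to the iterates $\f^{[n]}$ produces only a discrete (at best dense) set of moduli, so neither delivers the whole interval. The essential point, and the reason the conjugated self-composition $T_s\,W_{F,\f}$ succeeds, is that composing two automorphisms whose centres share the modulus $|a|$ but differ in argument produces a centre whose modulus genuinely depends on that argument and ranges continuously over $[0,2|a|/(1+|a|^2)]$. Checking that this range contains $[0,|a|]$ and that the resulting multiplier $G$ is a legitimate analytic symbol are then the only points requiring care.
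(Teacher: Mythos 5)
Your proof is correct and takes essentially the same route as the paper: both generate new co-isometries by combining $W_{F,\f}$ with the unitary rotation operators supplied by Proposition~\ref{prop-equiv-ax}, multiply two such co-isometric WCOs so that the composition symbol is a product of automorphisms whose centres have modulus $|a|$ but differ in argument, and then apply the intermediate value theorem to the same modulus formula $|a|\,|v-1|/\bigl|1-|a|^2 v\bigr|$, with endpoint values $0$ and $2|a|/(1+|a|^2)\ge |a|$. The only cosmetic difference is that the paper squares the single rotated operator $C_{R_\ta} W_{F,\f}$ whereas you multiply the conjugated operator $T_s$ by the original $W_{F,\f}$; the resulting automorphism symbols differ only by a rotation, so the centres have identical moduli.
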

\begin{proof}
It is easy to check directly that the product of two co-isometric operators is again co-isometric. Thus, whenever $|\ta|=1$ and $W_{F,\f}$ is co-isometric, the operator $C_{R_\ta} W_{F,\la \vf_a}$ is also co-isometric. In view of the simple identity for compositions of automorphisms:
$$
 \vf_{\la,a} (\ta z) = \vf_{\ta \la,\overline{\ta} a}(z)\,,
$$
we obtain the following operator identity:
$$
 C_{R_\ta} W_{F,\vf_{\la,a}} = W_{C_{R_\ta F},\,\vf_{\ta \la,\,\overline{\ta} a}}\,,
$$
so the latter WCO is a co-isometric operator for every value of $\ta$ with $|\ta|=1$. Thus, its square
$$
 W_{C_{R_\ta F},\,\vf_{\ta \la, \overline{\ta} a}} W_{C_{R_\ta F},\,\vf_{\ta \la, \overline{\ta} a}} = W_{G,\,\vf_{\mu,c}}
$$
is also co-isometric, where
\begin{equation}
 G(z) = F(\ta z) \cdot F(\vf_{\ta^2\la,\overline{\ta} a}(z))  \,, \qquad \vf_{\mu,c} = \vf_{\ta \la,\,\overline{\ta} a} \circ \vf_{\ta \la,\,\overline{\ta} a}\,.
 \label{eqn-comp}
\end{equation}
(Note that the last map must equal some $\vf_{\mu,c}$ for some $c\in\D$ and some $\mu$ with $|\mu|=1$ since it is again a disk automorphism.)
\par
Next, let $b$ be an arbitrary number such that $0<b<|a|$. Let $F$ and $\f$ be fixed as above. We have the freedom to choose $\ta$ arbitrarily in \eqref{eqn-comp} and will now show that there exists $c$ as above (with $\ta$ chosen appropriately) so that $|c|=b$. In fact, such value of $c$ from the conditions above can easily be computed explicitly; indeed, we must have
$$
 (\vf_{\ta \la,\overline{\ta} a} \circ \vf_{\ta \la,\overline{\ta} a}) (c) = 0\,,
$$
hence $\vf_{\ta \la,\overline{\ta} a} (c) = \overline{\ta} a$, meaning that
$$
 \ta \la \frac{\overline{\ta} a - c}{1 - \overline{a} \ta c} = \overline{\ta} a\,,
$$
which yields
$$
 c = \overline{\ta} a \frac{\la \ta - 1}{\la \ta - |a|^2}\,.
$$
Now note that
$$
 |c| = |a| \frac{|\la \ta - 1|}{|\la \ta - |a|^2|}
$$
is a continuous function of the complex parameter $\ta$. When $\ta = \overline{\la}$, this function takes on value $0$ while choosing $\ta = - \overline{\la}$ yields
$$
  \frac{2 |a|}{1 + |a|^2} > |a|
$$
as the value of the function (recall that $a\neq 0$ since $\f$ is not a rotation). Since the complex parameter $\ta=e^{i t}$ can in turn be viewed as a continuous function of the real variable $t\in [0,2\pi]$, the elementary Bolzano's intermediate value theorem from Calculus implies that there exists a value $\ta$ with $|\ta|=1$ such that $|c|=b$, as claimed. (Of course, the last argument could have been made more explicit.)
\end{proof}
\par\medskip
We already know from T. Le's work \cite{L} that if $\f$ is a disk automorphism and $F=\mu (\f^\prime)^{\g/2}$, where $\g=\g(1)$ and $\mu$ is a constant such that $|\mu|=1$, then the induced operator  $W_{F,\f}$ is unitary (hence, also co-isometric) on the space $\ch_\g$. The next key statement identifies such spaces as the only ones on which $W_{F,\f}$ can be co-isometric if the automorphism $\f$ is not a  rotation. It should be remarked that, using the following conformal property of the kernel:
$$
 K_{\f(w)}(\f(z)) = \frac{K_a(a) K_w(z)}{K_a(z) K_w(a)}\,,
$$
for all disk automorphisms $\f$ and $a=\f^{-1}(0)$, the theorem below can also be deduced from a characterization of the spaces $\ch_\g$ given by \cite[Proposition~4.3]{H}.
\par\medskip
\begin{thm} \label{thm-unbded-ker}
Let $\ch$ be a weighted Hardy space that satisfies axioms (\textbf{A1})--(\textbf{A3}) and whose reproducing kernel is unbounded on the diagonal, and let $W_{F,\f}$ be a co-isometric WCO on $\ch$.
If the automorphism $\,\f\/$ is not a rotation, then there exists a positive $\g$ (namely, $\g=\g(1)$ in the formula for the kernel) such that $\ch=\ch_\g$, a space whose coefficients $\g(n)$ satisfy \eqref{eqn-coeff-wds} and whose kernel, thus, is of the form \eqref{eqn-spec-ker}. Moreover, $F = \mu (\f^\prime)^{\g/2} = \nu \frac{K_a}{\|K_a\|}$, where $a=\f^{-1}(0)$ and $\mu$ and $\nu$ are  constants such that $|\mu|=|\nu|=1$.

\end{thm}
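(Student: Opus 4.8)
The plan is to convert the co-isometry hypothesis into an explicit functional equation for the kernel and then, via Lemma~\ref{lem-autom-change}, force that equation to hold on a whole interval of parameters, which will pin down every coefficient $\g(n)$. First I would determine the shape of $F$. Writing $a=\f^{-1}(0)\in\D$ (with $a\neq0$, since $\f$ is not a rotation) and setting $w=a$ in \eqref{eqn-F-fi}, the factor $K_{\f(a)}(\f(z))=K_0(\f(z))$ is identically $1$, so $F(z)=K_a(z)/\overline{F(a)}$; taking $z=a$ gives $|F(a)|^2=K_a(a)=\|K_a\|^2$ and hence $F=\nu K_a/\|K_a\|$ with $|\nu|=1$, which is the second formula claimed for $F$. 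The same reasoning applies to any co-isometric WCO whose symbol is an automorphism $\psi=\vf_{\mu,c}$ with $c=\psi^{-1}(0)$: its weight $G$ satisfies $G=K_c/\overline{G(c)}$ with $|G(c)|^2=K_c(c)$, and substituting this into \eqref{eqn-F}—in which $\mu$ cancels because $K_{\mu c}(\mu\vf_c(z))=K_c(\vf_c(z))$—yields the functional equation
\[
 K_c(z)\,K_c(\vf_c(z))=K_c(c)\,,\qquad z\in\D\,.
\]

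A single value of $c$ constrains the numbers $\g(n)$ only through one scalar relation, so the crucial move is to let $c$ vary. Here I would apply Lemma~\ref{lem-autom-change}: for each $b\in[0,|a|]$ it supplies a co-isometric $W_{G,\vf_{\mu,c}}$ with $|c|=b$, so the displayed functional equation holds for some $c$ of every modulus $b\in[0,|a|]$. Fixing such a $c$ and taking the logarithmic derivative in $z$ (legitimate near $z=0$, where $K_c(0)=1$), then evaluating at $z=0$ using $\vf_c(0)=c$, $\vf_c^\prime(0)=-(1-|c|^2)$, $K_c^\prime(0)/K_c(0)=\g(1)\overline c$ and $K_c^\prime(c)/K_c(c)=\overline c\,S^\prime(|c|^2)/S(|c|^2)$, where $S(x)=\sum_{n=0}^\infty\g(n)x^n$, the relation collapses (after cancelling $\overline c\neq0$) to the separable differential equation
\[
 \frac{S^\prime(x)}{S(x)}=\frac{\g}{1-x}\,,\qquad \g:=\g(1)\,,
\]
now valid for all $x=|c|^2$ in the interval $[0,|a|^2]$. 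Since $S(0)=1$ and $S>0$, this gives $S(x)=(1-x)^{-\g}$ throughout the interval; as $S$ and $(1-x)^{-\g}$ are power series agreeing on a set with an accumulation point, their coefficients coincide, so $\g(n)=\G(n+\g)/(\G(\g)\,n!)$, which is precisely \eqref{eqn-coeff-wds}. Hence $K_w(z)=(1-\overline w z)^{-\g}$ and $\ch=\ch_\g$.

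It then remains to match $F$ with $(\f^\prime)^{\g/2}$. In $\ch_\g$ one computes $\f^\prime(z)=-\la(1-|a|^2)(1-\overline a z)^{-2}$, so a branch of $(\f^\prime)^{\g/2}$ equals a constant of modulus $(1-|a|^2)^{\g/2}$ times $(1-\overline a z)^{-\g}=K_a(z)$; since $\|K_a\|=(1-|a|^2)^{-\g/2}$, that constant times $\|K_a\|$ is unimodular, so $(\f^\prime)^{\g/2}$ is a unimodular multiple of $K_a/\|K_a\|$. Combined with the first step this gives $F=\mu(\f^\prime)^{\g/2}$ with $|\mu|=1$, which completes the proof.

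I expect the main obstacle to be exactly the passage from one parameter to an interval. The co-isometry relation for the given $\f$ alone determines $\sum_n\g(n)x^n$ only at the single point $x=|a|^2$, and so cannot by itself recover the full sequence $(\g(n))$; it is precisely the point-moving device of Lemma~\ref{lem-autom-change} that promotes this one numerical identity into the differential equation above, and thereby identifies $\ch$ as one of the spaces $\ch_\g$.
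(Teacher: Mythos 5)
Your proposal is correct and takes essentially the same approach as the paper: both reduce the co-isometry hypothesis (via \eqref{eqn-F} and \eqref{eqn-F-fi} with $w=a$) to a single scalar identity at $x=|a|^2$ --- equivalent to $(1-x)S'(x)=\g(1)S(x)$ for $S(x)=\sum_{n}\g(n)x^n$ --- and then, exactly as you do, use Lemma~\ref{lem-autom-change} to upgrade that identity to a whole interval and conclude by uniqueness of power-series coefficients. The only difference is packaging: the paper equates two derivative formulas for $F$ and evaluates at $z=a$, then solves the resulting coefficient recurrence $(n+1)\g(n+1)=(n+\g(1))\g(n)$, whereas you differentiate the kernel functional equation $K_c(z)K_c(\vf_c(z))=K_c(c)$ at $z=0$ and solve the equivalent differential equation in closed form.
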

\begin{proof}
Let $\f = \vf_{\la,a}$, for some $a\in\D$, $a\neq 0$, and $\la$ with $|\la|=1$. Put $w=a$ in \eqref{eqn-F-fi} and recall that $\f(a)=0$ to obtain
$$
 F(z) = \frac{K_a(z)}{\overline{F(a)}}\,.
$$
In what follows, we will always write simply $K_a^\prime(z)$ instead of $\dfrac{\pd K_a}{\pd z}(z)$. After differentiation with respect to $z$, we obtain
$$
  F^\prime(z) = \frac{K_a^\prime(z)}{\overline{F(a)}}\,.
$$
Also, note that $F(0)=1/\overline{F(a)}$.
\par
On the other hand, recalling that $\f(0)=\la a$ and differentiating \eqref{eqn-F} with respect to $z$, we get
$$
  F^\prime(z) = \frac{-\f^\prime(z) K_{\la a}^\prime(\f(z))}{\overline{F(0)} K_{\la a}^2(\f(z))} =  \frac{\la (1- |a|^2) K_{\la a}^\prime(\f(z))}{\overline{F(0)} (1-\overline{a}z)^2  K_{\la a}^2(\f(z))} \,.
$$
Equating the right-hand sides of the last two equations, taking also into account the fact that $F(0)=1/\overline{F(a)}$, yields
$$
 \frac{|F(a)|^2 (1-|a|^2)}{(1-\overline{a}z)^2} K_{\la a}^\prime(\f(z)) = \overline{\la} K_{\la a}^2(\f(z)) K_a^\prime(z)\,.
$$
Setting $z=a$, we obtain
\begin{equation}
 \frac{|F(a)|^2}{1-|a|^2} K_{\la a}^\prime(0) = \overline{\la} K_a^\prime(a)\,.
 \label{eq-F-a}
\end{equation}
Differentiation of the formula for the kernel \eqref{eqn-ker} with respect to $z$ yields
$$
 K_w^\prime (z) = \g(1)\overline{w} + \sum_{n=2}^\infty n \g(n) \overline{w}^n z^{n-1}\,,
$$
hence
$$
 K_a^\prime (a) = \g(1)\overline{a} + \sum_{n=2}^\infty n \g(n) \overline{a} |a|^{2(n-1)}\,, \qquad K_{\la a}^\prime(0) = \g(1) \overline{\la} \overline{a}\,.
$$
Bearing in mind that
$$
 |F(a)|^2 = K_a(a) = 1 + \sum_{n=1}^\infty \g(n) |a|^{2n}
$$
and using \eqref{eq-F-a}, it follows that
\begin{eqnarray*}
 1 + \sum_{n=1}^\infty \g(n) |a|^{2n} &=& (1-|a|^2) \( 1 + \sum_{n=2}^\infty \frac{n \g(n)}{\g(1)} |a|^{2(n-1)} \)
\\
 &=& 1 + \sum_{n=1}^\infty \frac{(n+1) \g(n+1)}{\g(1)} |a|^{2n} - \sum_{n=1}^\infty \frac{n \g(n)}{\g(1)} |a|^{2n}
\end{eqnarray*}
(after regrouping the terms). From here we obtain that
\begin{equation}
 \sum_{n=1}^\infty \g(n) (n+\g(1)) |a|^{2n} = \sum_{n=1}^\infty (n+1) \g(n+1) |a|^{2n}\,.
 \label{eq-a-c}
\end{equation}
Note that this holds only for one point $a$, for one given co-isometric  operator $W_{F,\,\f_{\la,a}}$. However, an application of Lemma~\ref{lem-autom-change} allows us to produce other WCOs $W_{G,\, \vf_{\mu,c}}$ that are also co-isometric and with different values $c$ instead of $a$ so as to include all possible values of $|c|$ with $0\le |c|\le |a|$, with conclusions analogous to \eqref{eq-a-c}. Note also an important point that, whenever $c\neq 0$, the symbol $\vf_{\mu,c}$ is not a rotation. By this construction, we obtain
$$
 \sum_{n=1}^\infty \g(n) (n+\g(1)) x^{2n} = \sum_{n=1}^\infty (n+1) \g(n+1) x^{2n}
$$
for all $x\in [0,|a|]$, the case $x=0$ being an obvious equality. The power series in the identity above are both even functions of $x$ that converge and coincide in $[-|a|,|a|]$. The uniqueness of the coefficients of a real power series in such an interval implies that
$$
 (n+1) \g(n+1) = \g(n) (n+\g(1))\,, \qquad \mathrm{ for \ all \ } n\ge 1\,.
$$
This recurrence equation is easily solved: since
$$
 \g(n+1) = \g(n) \frac{n+\g(1)}{n+1}\,, \qquad \mathrm{ for \ all \ } n\ge 1\,,
$$
and the formula trivially also extends to the case $n=0$ (by the fact that $\g(0)=1$), we have
$$
 \g(n) = \frac{n-1+\g(1)}{n} \g(n-1)\,, \qquad \mathrm{ for \ all \ } n\ge 1\,.
$$
Using the standard property $\G(x+1)=x\G(x)$ for all $x>0$, from here we obtain by induction the desired formula \eqref{eqn-coeff-wds}, with $\g=\g(1)$.
\par\smallskip
Next, we derive the formula for $F$ in terms of $\f^\prime$. From \eqref{eqn-F}, for the space is $\ch = \ch_\g$ and $\f=\vf_{\la,a}$, we have
$$
 F(z) =  \frac{1}{\overline{F(0)} K_{\f(0)}(\f(z))} =  \frac{1}{\overline{F(0)}} \(1-\overline{\f(0)} \f(z)\)^\g =
 \frac{1}{\overline{F(0)}} \frac{(1-|a|^2)^\g}{(1-\overline{a}z)^\g}  \,.
$$
For $z=0$ this shows that $|F(0)|= (1-|a|^2)^{\g/2}$, hence for appropriately chosen $\mu$ and $\nu$ with $|\mu|=|\nu|=1$ we obtain
$$
 F(z) =  \nu \frac{(1-|a|^2)^{\g/2}}{(1-\overline{a}z)^\g} = \nu \frac{K_a(z)}{\|K_a\|} = \mu (\f^\prime(z))^{\g/2}\,,
$$
which completes the proof.
\end{proof}
\par\medskip
Finally, putting together Theorem~\ref{thm-bded-ker}, Theorem~\ref{thm-phi-aut}, and Theorem~\ref{thm-unbded-ker} and the comments on the results of Le preceding Theorem~\ref{thm-unbded-ker}, we obtain the complete conclusions of Theorem~\ref{thm-main}.
\par\medskip
\section{Final remarks}
 \label{sec-rmrks}
A natural problem for further research would be to understand the isometric WCOs on the spaces considered. This is obviously a more difficult question. For example, it does not seem clear how one could obtain a formula like \eqref{eqn-F} in this case. Working with the assumption typical of the isometries: $\,W_{F,\f}\sp{\ast} W_{F,\f} = I$, one does not seem to get much more than the formula
$$
  W_{F,\f}\sp{\ast} K_w = \overline{F(w)} K_{\f(w)}
$$
already proved here. But this does not seem to imply in any obvious way a general formula for $W_{F,\f}\sp{\ast} f$, $f\in\ch$, nor a formula like \eqref{eqn-F}.
\par
Still a harder question would be to describe the normal weighted composition operators on the general Hilbert spaces of analytic functions. Such operators have been studied but not fully described even on $H^2$ in \cite{BN}, except in the case when the fixed point of the composition symbol belongs to the disk. Interesting general results have been obtained for the spaces $\ch_\g$ in \cite{L} and also for general spaces in \cite{Z2}. To the best of our knowledge, even in one variable, a complete answer is far from being known for the general family of spaces considered in this paper.
\par\medskip
\textsc{Acknowledgments}. {\small The first author was supported by the Viera y Clavijo program (2019/0000096) of ULL, Spain. The third author recieved partial support from the Erwin Schr\"odinger Institute, Vienna, during the workshop ``Operator Related Function Theory'' held in April of 2019. All authors acknowledge support from  PID2019-106870GB-I00 from MICINN, Spain. 
\par
The authors would like to thank Michael Hartz, Jos\'e A. Pel\'aez, and Nina Zorboska for some useful comments and references and Iason Efraimidis for pointing out some misprints in an earlier version. We are particularly grateful to the referee for pointing out several imprecise and incomplete details in an earlier version of the manuscript, for a number of useful comments and also for suggesting some alternative ideas of proofs.}


\end{document}